\DeclareMathOperator{\Adj}{Adj}
\def\prodp{\mathop{\prod\Bigl.^{*}}\limits}
\def\prodpt{\mathop{\prod\bigl.^{*}}\limits}
\newtheorem{theorem}{Theorem}[section] 
\newtheorem{lemma}[theorem]{Lemma}     
\newtheorem{proposition}[theorem]{Proposition}
\theoremstyle{definition}
\newtheorem{definition}{Definition}
\newtheorem{remark}{Remark}
\newcommand{\coloneqq}{\mathrel{\vcenter{\baselineskip0.5ex \lineskiplimit0pt
                     \hbox{\scriptsize.}\hbox{\scriptsize.}}}%
                     =}
\renewcommand{\le}{\leqslant}
\renewcommand{\ge}{\geqslant}
\renewcommand{\leq}{\leqslant}
\renewcommand{\geq}{\geqslant}
\newcommand{\bN}{\mathbf{N}}
\newcommand{\bZ}{\mathbf{Z}}
\newcommand{\C}{\mathscr{C}}
\newcommand{\abs}[1]{\left\lvert#1\right\rvert}
\newcommand{\bigo}[1]{O\mathopen{}\left(#1\right)}
\newcommand{\rest}{\mathclose{}|\mathopen{}}
\newcommand{\eps}{\varepsilon}
\newcommand{\Pe}{\text{\boldmath$P$}}
\newcommand{\Ee}{\text{\boldmath$E$}}
\newcommand{\intervalle}[4]{#1#2\mathpunct{}\nonscript\,,#3#4}
\newcommand{\icc}[2]{\intervalle{\left[}{#1}{#2}{\right]}}
\newcommand{\ioc}[2]{\intervalle{\left(}{#1}{#2}{\right]}}
\newcommand{\sst}[2]{\left\{#1\,:\,#2\right\}}
\title[Sieving in combinatorial structures]%
 {Expanders graphs and sieving in combinatorial structures} %
 \author{Florent Jouve}
\address{IMB, Universit\'e de Bordeaux, Talence, France}
\email{florent.jouve@math.u-bordeaux.fr}
\author{Jean-S\'ebastien Sereni}
\address{C.N.R.S., LORIA, Vand\oe uvre-l\`es-Nancy,
France.}
\email{sereni@kam.mff.cuni.cz}
\begin{document}

\maketitle

\begin{abstract}
        We prove a general large sieve statement in the context of 
        random walks on subgraphs of a given graph. This can be seen as a 
        generalization of previously known results where one performs 
        a random walk on a group enjoying a strong spectral gap property. 
        In such a context the point is to exhibit a strong uniform expansion
        property for a suitable family of Cayley graphs on quotients. In our 
        combinatorial approach, this is replaced by a result of Alon--Roichman
        about expanding properties of random Cayley graphs. Applying the
        general setting we show e.g.,~that with high probability (in a
        strong explicit sense) random coloured subsets of integers
        contain monochromatic (non-empty) subsets summing to~$0$,
        or that a random coloring of the edges of a complete graph 
        contains a monochromatic triangle.
\end{abstract}

\section*{Introduction}
The relevance of using families of expander graphs for studying objects or
solving problems coming from a broad variety of mathematical areas has been
emphasized in numerous ways in the recent years. Notably the combination of
sieving arguments together with expansion properties has proved particularly
efficient. Let us mention the groundbreaking work~\cite{BGS10} where the mix of
such techniques enabled the authors to detect almost primes in a variety of
non-Abelian situations.  A different kind of sieve together with the same
expansion properties have also been exploited in the context of group
theory~\cite{LM12}, or to obtain quantitative results in the probabilistic
Galois theory of arithmetic groups~\cite{JKZ13}.  In the sieving processes used
in the aforementioned works, one is naturally led to a crucial step where some
\emph{spectral gap} property is needed. Deep results about the groups involved
then come into play. Typically the required properties are provided by recent
breakthroughs in algebraic combinatorics that have led to strong forms of
Lubotzky's Property~$(\tau)$ (so-called \emph{superstrong approximation},
a culminating point of the study of which is the work by Golsefidy
and~Varj\'u~\cite{GV12}).  Indeed, the spectral gap result needed can be
tautolgically interpreted as a property of expansion of a certain family of
graphs.

The goal of the present paper is to establish a general large sieve inequality
in a purely combinatorial setting.  More precisely, we develop an axiomatic
version of sieve in the context of countable families of Cayley graphs, which are
randomly generated \emph{via} a random walk. This can be viewed as
a generalisation of the framework used e.g.~in~\cite[Chap.~7]{Kow08}
or~\cite{JKZ13} to a situation where spectral gap properties on groups are no
longer available. To circumvent the lack of algebraic structure we will exploit
the fact that random graphs are ``good expanders''. Precisely we will use a result
of Alon--Roichman~\cite{AlRo94} according to which a family of random Cayley
graphs is very likely to form a good family of expanders. However, further
difficulties arise: contrary to the usual situation, expansion properties are not
sufficient to ensure good enough cancellation in the correlation sums appearing.
To obtain the required cancellation we introduce structural properties and use
concentration arguments.  To the best of our knowledge, this point of view is new
within sieving contexts and it seems to us that it might lead to new uses
and theoretical study of sieves in a combinatorial setting.

After describing our general setting and proving our main result
(Theorem~\ref{thm:LS}), we briefly present some concrete uses of our result to
specific questions.

We describe several applications of our method to the study of typical
properties of subgraphs of a given graph. To produce random elements for which
we want to test if some given property holds, we perform a random walk on the
family of graphs studied, (cf. also~\cite{JKZ13}).  Another approach could
consist in quantifying the proportion of elements that satisfy an expected
property among a finite subset of the family of graphs considered.  For the
applications we have in mind this question would in fact be much easier. As
a matter of fact we do need to quantify proportions of ``good'' elements as
part of our sieving process.

The paper is organized as follows. In Section~\ref{Section:Setting} we state
and prove the main result and we emphasize the way in which Alon--Roichman's
theorem enables us to work in a setting which is combinatorial in nature
(whereas earlier works such as~\cite[Chap.~7]{Kow08} required a more
algebraic framework). The rest of the paper is devoted to applications of the
main result. Let us end the introduction by giving (rough) statements for some
of the concrete consequences of our main theoretical result
(Theorem~\ref{thm:LS}).  We conclude the paper with remarks on further
questions that may be of interest and that can be successfully investigated via
our method. We notably state a Ramsey type result (together with a sketch
of proof) obtained by suitably adapting the arguments used in the second
application.

\par\medskip

\textbf{Notation.} If~$X$ is a finite set, then $\# X$ and $\abs{X}$ synonymously
denote the cardinality of~$X$.

If $X$ is a finite graph, then $\Adj(X)$ is the adjacency operator sending
a $\mathbf{C}$-valued function on the vertices of~$X$ to the function
$(x\mapsto \sum_yf(y))$, where the sum is over the neighbors~$y$ of the
vertex~$x$. If $X$ is moreover $d$-regular (that is, every vertex of~$X$ has
degree~$d$), then the \emph{normalized adjacency operator} is
$\frac{1}{d}\cdot\Adj(X)$.

If~$G$ is a group and~$S\subset G$, then $X(G,S)$ is the Cayley graph on~$G$
with edge set~$S\cup S^{-1}\coloneqq\{s\in G\colon\text{$s\in S$ or $s^{-1}\in
S$}\}$.  If~$G$ is finite and Abelian, then $\hat{G}$ is the character group of
$G$.  If~$x$ is a non-negative real number, then $\lceil x\rceil$ and~$\lfloor
x\rfloor$ are the least integer greater than or equal to~$x$ and the greatest
integer smaller than or equal to~$x$, respectively. If $R$ is a positive
integer, then~$[R]$ is the set~$\{1,\dotsc,R\}$.  Given a probability space
$(\Omega,\Sigma,\Pe)$ and two events~$A$ and~$B$ such that $\Pe(B)\neq 0$, we
let~$\Pe(A\mid B)$ be the \emph{conditional probability} $\Pe(A\cap B)/\Pe(B)$.
If~$f$ and~$g$ are two real-valued functions defined on a set $\mathcal D$ and
depending on a set~$\mathcal P$ of parameters, then $f(x)\ll_{\mathcal P_0} g(x)$
means that there exists a positive constant~$C$ depending only on the
subset~$\mathcal P_0\subseteq \mathcal P$ such that $\abs{f(x)}\le C\abs{g(x)}$
whenever~$x\in\mathcal D$.

\section{The general setting and a large sieve for graphs}\label{Section:Setting}
\subsection{Random walk large sieve: statement of the main result}
Stating our main result requires some definitions and a precise description of
the general setting.  Let~$G$ be an Abelian group (in this section, the group
law is noted multiplicatively) and~$\Lambda\subset\bN$ be a (non necessarily
finite)  set of indices.  We suppose we are given a family
$(H_\ell)_{\ell\in\Lambda}$ of subgroups of~$G$ such that for each~$\ell$ the
index $n_\ell\coloneqq[G:H_\ell]$ is finite.  We let~$\rho_\ell\colon G\rightarrow
G/H_\ell$ be the canonical projection.
If $\ell$ and~$\ell'$ are two distinct elements of~$\Lambda$,
we define ${\rho_{\ell,\ell'}\colon G\rightarrow G/H_\ell\times G/H_{\ell'}}$
by ${g\mapsto (\rho_\ell(g),\rho_{\ell'}(g))}$.

We fix once and for all a probability space~$(\Omega,\Sigma,\Pe)$ and an
arbitrarily small real~${\delta\in (0,1)}$. We further set
\begin{equation}\label{eq:psi}
  \psi(\delta)\coloneqq 2\left((2-\delta)\log (2-\delta)+\delta\log\delta\right)^{-1}.
\end{equation}
For each $\ell \in \Lambda$, we define the quantity
\[
  \kappa(b_\ell,\ell;\delta)\coloneqq\left\lceil\psi(\delta)\bigl(\log n_\ell+b_\ell+\log 2\bigr)\right\rceil,
\]
where $b\coloneqq(b_\ell)$ is a parameter (a sequence of positive real
numbers). 

Now let~$s_1^{(\ell)},\dotsc,s_{\kappa(b_\ell,\ell;\delta)}^{(\ell)}$ be
independent identically distributed random variables taking values in~$G/H_\ell$.
The random walk on~$G$ we want to consider is obtained by lifting
the sets $\left\{s_1^{(\ell)},\dotsc,
s_{\kappa(b_\ell,\ell;\delta)}^{(\ell)}\right\}$ (and their ``inverses'' so
that all the graphs considered are then undirected) to~$G$.  To that purpose
we define the random variable
\[
S_\ell(b_\ell,\delta)\coloneqq\left\{s_1^{(\ell)},\dotsc,
s_{\kappa(b_\ell,\ell;\delta)}^{(\ell)}\right\}\cup\left\{(s_1^{(\ell)})^{-1},\dotsc,
(s_{\kappa(b_\ell,\ell;\delta)}^{(\ell)})^{-1}\right\},
\]
which takes values in the set of subsets of~$G/H_\ell$. For every index~$\ell\in\Lambda$ and
every integer~$m\in\{1,\dotsc,\kappa(b_\ell,\ell; \delta)\}$, we need to
choose a representative~$\tilde{s}_m^{(\ell)}\in G$ of~$s_m^{(\ell)}$. The
particular representative we choose is imposed by the following condition of
\emph{admissibility}. (As we shall establish later on, for each family of subgroups
there exists at most one admissible local sequence in the sense of Definition~\ref{def:adm}.)
\begin{definition}\label{def:adm}
Let~$(H_\ell)_{\ell\in\Lambda}$ be a fixed family of subgroups of finite index of~$G$ and, for each $\ell\in\Lambda$, let~$R_\ell$ be a set of representatives of~$G/H_\ell$. 
The sequence~$(H_\ell, R_\ell)_{\ell\in\Lambda}$ is called an \emph{admissible local sequence} for~$G$ if
\begin{enumerate}
\item[(i)] $\bigcap_{\ell\in\Lambda} H_\ell=\{1\}$; and
\item[(ii)] $\forall \ell,\ell'\in\Lambda,\quad \ell\neq\ell'\Rightarrow
R_\ell\subseteq H_{\ell'}.$
\end{enumerate}
\end{definition}

Let us assume that the sequence~$(H_\ell)$ of subgroups of~$G$
is such that there is an admissible local sequence~$(H_\ell,R_\ell)$ for~$G$. 
For each~$\ell$ and each~$m\in\{1,\dotsc,\kappa(b_\ell,\ell; \delta)\}$,
we choose the unique representative~$\tilde{s}_m^{(\ell)}$ of~$s_m^{(\ell)}$
in~$R_\ell$. 
The aforementioned uniqueness of an admissible local sequence ensures (see Lemma~\ref{lem:adm})
that this defines in a unique way elements~$\tilde{s}_m^{(\ell)}$ in~$G$.
Next we set
\[
\tilde{S}_\ell(b,\delta)\coloneqq\left\{\tilde{s}_1^{(\ell)},\dotsc,
\tilde{s}_{\kappa(b_\ell,\ell;\delta)}^{(\ell)}\right\}
\cup
\left\{(\tilde{s}_1^{(\ell)})^{-1},\dotsc,
(\tilde{s}_{\kappa(b_\ell,\ell;\delta)}^{(\ell)})^{-1}\right\}.
\]
The subset of~$G$ we use to perform a random walk on~$G$ is
\begin{equation}\label{def:S}
 S(b,\delta)\coloneqq \prodp_{\ell\in\Lambda}\left(\{1\}\cup  \tilde{S}_\ell(b_\ell,\delta)\right).
\end{equation}
Let us explain precisely what the notation means. If $A_1,\dotsc, A_k$ are $k$
subsets of~$G$, the product $\prod_{i=1}^k A_i$ is the subset $\{a_1\dotsc
a_k\colon a_i\in A_i\}$ of~$G$. Here the symbol $\prodpt$ means that for all
$\ell$ but finitely many of them the $\ell$-th factor picked equals~$1$.
Finally $S(b,\delta)$ is not seen as a random variable but as the product over
$\ell$ of elements either  equal to~$1$ or picked
in~$\tilde{S}_\ell(b_\ell,\delta)$ evaluated at a common $\omega\in\Omega$.  In
other words we fix once and for all an element $\omega$ of~$\Omega$; picking an
element of~$S(b,\delta)$ amounts to picking~$1$ or an element of some
$\tilde{S}_\ell(b_\ell,\delta)(\omega)$, and then computing the product of
these elements.

With notation as above, we perform the following random walk
on~$G$. It is defined the same way as in~\cite[Chap.~7]{Kow08}.
\[
\begin{cases}
    X_0=g_0 & \\
    X_{k+1}=X_k\xi_{k+1}&\quad\text{for~$k\ge0$},
   \end{cases}
\]
where $g_0$ is a fixed element in~$G$ and the steps~$\xi_k$ are independent,
identically distributed random variables with distribution
\[
\Pe(\xi_k=s)=\Pe(\xi_k=s^{-1})=p_s=p_{s^{-1}}
\]
for every~$k$ and every~$s\in S(b,\delta)$, and where~$(p_s)_s$ is a sequence
of positive real numbers indexed by~$S(b,\delta)$ such that
\[
\sum_{s\in  S(b,\delta)}p_s=1.
\]
Of course the random walk depends on the parameters~$b=(b_\ell)_\ell$
and~$\delta$. If $\Lambda$ is finite, the most natural such random walk is
certainly the one defined by uniformly distributing the steps, that is,
$p_s\coloneqq\# S(b,\delta)^{-1}$ for every~$s\in S(b,\delta)$.  In general, we
require that the sum of probabilities~$p_s$ over elements~$s\in S(b,\delta)$
that are mapped by~$\rho_{\ell,\ell'}$ to any given $(s',t')\in
S_\ell(b_\ell,\delta)\times S_{\ell'}(b_{\ell'},\delta)$ is not too small.
Precisely, we assume throughout the paper that for any given~$L\geq 1$, every
$\ell,\ell'\in\Lambda\cap\icc{1}{L}$ and every~$(s',t')\in
S_\ell(b_\ell,\delta)\times S_{\ell'}(b_{\ell'},\delta)$,
\begin{equation*}\label{star}
  \sum_{\substack{s\in S(b,\delta)\\ \rho_{\ell,\ell'}(s)=(s',t')}}p_s\geq \frac{1}{\kappa(b_L,L,\delta)}, \tag{$\star$}
\end{equation*}
which seems an intuitive generalisation of the
uniform distribution to a general set~$\Lambda$.

By studying the properties of the random walk~$(X_k)_k$ our aim is to describe
the behavior of a ``generic'' element~$g\in G$. To do so, we make use of
Kowalski's abstract large sieve procedure extensively described, together with
applications, in his book~\cite{Kow08}.  As in every sieve method, one can only
handle cases where the typical properties at issue can be detected locally.  To
be more precise, we fix for each~$\ell\in \Lambda$ a conjugacy invariant subset
$\Theta_\ell\subset G/H_\ell$. The probability we want to upper bound is
\[
      \Pe(\forall \ell\in \Lambda,\,\rho_\ell(X_k)\not\in\Theta_\ell).
\]

When applicable, the method shall produce effective upper bounds for the
probability with which $X_k$ satisfies a fixed property that can be detected by
the condition $\rho_\ell(X_k)\not\in \Theta_\ell$ for some~$\Theta_\ell\subset
G/H_\ell$. Our main result is the following abstract sieve statement.
We refer the reader to the book by Kowalski~\cite[Prop.~3.5]{Kow08} for a (self-contained) sieve statement
that Theorem~\ref{thm:LS} builds on.  For more information on the random walk
sieve used here, see also~\cite[Chap.~7]{Kow08}.
\begin{theorem}\label{thm:LS}
With notation as above, we set $\kappa_{N}\coloneqq \kappa(b_{N},N,\delta)$ for $N\in\mathbf{N}$ and
\[
C_0\coloneqq\sup_{L\in\bN_{>0}}\max_{\substack{\ell \neq\ell' \in \Lambda\\ L\leq\ell, \ell'\leq 2L}}\frac{\#
S_\ell(b_\ell,\delta)}{\#S_{\ell'}(b_{\ell'},\delta)}.
\]
Assume that condition~\eqref{star} holds.
Then there exists a positive real~$\nu$ such that for every positive integer~$k$, 
\begin{align*}
\Pe(\rho_\ell(X_k)\not\in\Theta_\ell,\,\forall \ell\in \Lambda_L)&\leqslant \Pe(C_0=\infty)+ \sum_{\ell\in\Lambda_L}e^{-b_\ell}\\
&+\left(1+\bigg(\sum_{\ell\in\Lambda_L}n_\ell\bigg)(1-\kappa_{2L}^{-2}\nu)^k\right)\left(\sum_{\ell\in\Lambda_L}\frac{\#\Theta_\ell}{n_\ell}\right)^{-1},
\end{align*}
where~$L$ is any fixed positive integer
$\Lambda_L\coloneqq\Lambda\cap\icc{L}{2L}$ and the constant~$\nu$ depends only on~$C_0$, the set~$S(b,\delta)$,
 and the distribution of the steps~$\xi_j$ (that is, the sequence~$(p_s)$).
\end{theorem}
In applications the probability that~$C_0$ is infinite will be very small
\emph{via} a suitable choice of parameters. Later on we prove a lemma
(Lemma~\ref{lem:proba}) the purpose of which is to bound
efficiently~$\Pe(C_0=\infty)$.  We end this section by proving the aforementioned uniqueness
of admissible sequences, when they exist.
\begin{lemma}\label{lem:adm}
Let~$(H_\ell)_{\ell\in\Lambda}$ be a family of subgroups of finite index
of~$G$. There exists at most one family~$(R_\ell)_{\ell\in\Lambda}$
where~$R_\ell$ is a set of representatives of~$G/H_\ell$ such that
$(H_\ell,R_\ell)$ is an admissible local sequence.
\end{lemma}
\begin{proof}
Let~$(H_\ell, R_\ell^{(1)})$ and~$(H_\ell, R_\ell^{(2)})$ be two admissible
local sequences for~$G$.  Fix~$\ell_0\in\Lambda$, and let~$r_1\in
R_{\ell_0}^{(1)}$ and~$r_2\in R_{\ell_0}^{(2)}$ be representatives of the same
element of~$G/H_{\ell_0}$. So there exists~$h\in H_{\ell_0}$ such that
$r_1=r_2h$.  For any~$\ell\neq \ell_0$, applying the reduction morphism
$\rho_\ell$ to the above equality yields that $\rho_\ell(h)=1$, because of
condition~(ii) of~Definition~\ref{def:adm}. Thus~$h\in H_{\ell'}$ for
every~$\ell'\in \Lambda$. Now, condition~(i) of~Definition~\ref{def:adm} implies
that~$h=1$, hence $r_1=r_2$. This shows that
$R_{\ell_0}^{(1)}=R_{\ell_0}^{(2)}$, thereby concluding the proof.
\end{proof}

\subsection{Cayley graphs on quotients and expansion}\label{cayleygraphs}
Let~$G$ be an Abelian\footnote{The assumption that $G$ is Abelian is
unnecessary for most of the results of this section, but our main result and
the applications we develop only involve Abelian groups, so we stick to this
case where the exposition is simpler.} group. 
We are interested in the properties of the Cayley graphs on the
groups~$(G/H_\ell)_{\ell\in\Lambda}$ with edges corresponding to the values taken by the random
variables~$s_i^{(\ell)}$ for~$i\in\{1,\dotsc,\kappa(b_\ell,\ell;\delta)\}$.
These graphs are regular: the regularity equals the number of distinct values
taken by the random variables~$s_i$.

Throughout the paper, if $\mathcal G$ is a $k$-regular graph, then the
\emph{eigenvalues of~$\mathcal G$} are the
eigenvalues of the normalized adjacency operator $k^{-1}\Adj(\mathcal G)$.
An eigenvalue $\lambda$ is \emph{non trivial} if $\abs{\lambda}\neq1$.
The \emph{spectral gap} $\varepsilon(\mathcal G)$ of~$\mathcal G$
is defined to be $\min\{1-\abs{\lambda}\colon \text{$\lambda$ is a non trivial eigenvalue
of~$\mathcal G$}\}$ (recall that the eigenvalue $-1$ occurs if and only if $\mathcal G$
is bipartite). We adopt the following definition for an expander graph, which
\textbf{slightly differs from the standard one}. In particular, for us,
a $k$-regular graph with spectral gap greater than~$1/2$ is a $\gamma$-expander
graph for any~$\gamma\in(0,1/2]$.
\begin{definition}\label{def-exp}
Let~$\gamma$ be a real number satisfying $0<\gamma\le1/2$. A $k$-regular graph $\mathcal G$ is a
$\gamma$-\emph{expander graph} if the spectral gap of~$\mathcal G$ is at least $\gamma$.
\end{definition}
\noindent

The reason for introducing the above setup is
a theorem of Alon \& Roichman~\cite[Th.~1]{AlRo94}, which has been
subsequently improved by Landau \& Russell~\cite[Th.~2]{LaRu04} and Loh \&
Schulman~\cite[Th.~1]{LoSc04}. The last improvement obtained so far, which
is the version we state and use, is due to Christofides \& Markstr\"om~\cite[Th.~5]{ChMa08}.
\begin{theorem}[Christofides--Markstr\"om]\label{AlonRoichman}
 With notation as above, fix an index $\ell$ in~$\Lambda$. For every
 $\delta\in(0,1/2]$, the probability
 that $X(G/H_\ell,\{s_1^{(\ell)},\dotsc,
 s_{\kappa(b_\ell,\ell;\delta)}^{(\ell)}\})$ is not a $\delta$-expander graph
 is less than $e^{-b_\ell}$.
\end{theorem}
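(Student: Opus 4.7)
The plan is to Fourier-decompose the normalised adjacency operator of the random Cayley graph over $\Irr(G/H_\ell)$, apply a non-commutative Chernoff inequality to each non-trivial isotypic block, and then union-bound over irreducible representations; the particular value of $\kappa(b_\ell,\ell;\delta)$ will turn out to be exactly what makes the final estimate at most $e^{-b_\ell}$.

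Concretely, set $\kappa\coloneqq\kappa(b_\ell,\ell;\delta)$ and let $A$ denote the normalised adjacency operator of the Cayley graph $X(G/H_\ell,\{s_1^{(\ell)},\ldots,s_\kappa^{(\ell)}\})$ acting on $L^2(G/H_\ell)$. The Peter-Weyl isomorphism $L^2(G/H_\ell)\simeq\bigoplus_{\rho\in\Irr(G/H_\ell)}V_\rho^{\oplus\dim\rho}$ block-diagonalises $A$, and on the $\rho$-isotypic component $A$ acts as the Hermitian operator
$$T_\rho\coloneqq\frac{1}{\kappa}\sum_{i=1}^{\kappa}Y_i^{\rho}, \qquad Y_i^\rho\coloneqq\tfrac{1}{2}\bigl(\rho(s_i^{(\ell)})+\rho(s_i^{(\ell)})^{-1}\bigr).$$
The trivial representation contributes only the eigenvalue $1$. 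For each non-trivial $\rho$, the $Y_i^\rho$ are i.i.d.\ Hermitian contractions, with $\|Y_i^\rho\|\le 1$ and $\EE[Y_i^\rho]=0$ by orthogonality of matrix coefficients against the trivial character on $G/H_\ell$. Consequently the failure event $\varepsilon(X)<\delta$ is contained in $\bigcup_{\rho\text{ non-triv.}}\{\|T_\rho\|\ge 1-\delta\}$.

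Next I would invoke the matrix Chernoff inequality of Ahlswede-Winter, in the sharpened form due to Christofides-Markström: for every $t\in(0,1)$ and every non-trivial $\rho$,
$$\Pe\bigl(\|T_\rho\|\ge t\bigr)\le 2\dim\rho\cdot\exp\bigl(-\kappa\cdot J(t)\bigr), \qquad J(t)\coloneqq\tfrac{1}{2}\bigl((1+t)\ln(1+t)+(1-t)\ln(1-t)\bigr),$$
the exponent being the Rademacher relative entropy rate. Specialising to $t=1-\delta$ produces precisely the expression $(2-\delta)\ln(2-\delta)+\delta\ln\delta$ appearing in $\kappa(b_\ell,\ell;\delta)$. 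A union bound over non-trivial $\rho$, together with $\sum_{\rho\text{ non-triv.}}\dim\rho\le\sum_{\rho\in\Irr(G/H_\ell)}\dim\rho$, then gives
$$\Pe\bigl(\varepsilon(X)<\delta\bigr)\le 2\Bigl(\sum_{\rho\in\Irr(G/H_\ell)}\dim\rho\Bigr)\exp\bigl(-\kappa\cdot J(1-\delta)\bigr).$$
Taking logarithms, isolating $\kappa$, and taking the ceiling identifies the minimal integer making this upper bound at most $e^{-b_\ell}$ as exactly $\kappa(b_\ell,\ell;\delta)$.

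The main technical hurdle is the sharp operator Chernoff estimate itself, namely extracting the rate $J(t)$ rather than a weaker Hoeffding-type $t^2/2$. This requires a tight control of the matrix moment generating function $\EE[\exp(\theta Y_i^\rho)]$ obtained by Christofides-Markström through a reduction to a Bernoulli benchmark, exploiting that each $\rho(s_i^{(\ell)})$ is unitary so that the eigenvalues of $Y_i^\rho$ lie in $[-1,1]$. Once that inequality is in hand, the remainder of the argument is a routine union bound followed by the algebraic rearrangement already outlined above.
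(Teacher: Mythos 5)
This theorem is not proved in the paper at all: the authors attribute it explicitly to Christofides and Markström, citing it as an improvement of Alon--Roichman, Landau--Russell, and Loh--Schulman, and then use it as a black box. Accordingly there is no internal proof to compare against, but your reconstruction is a faithful sketch of the cited argument. The skeleton is exactly right: block-diagonalise the normalised adjacency operator through Peter--Weyl, observe that the trivial representation carries the eigenvalue $1$, note that for each non-trivial $\rho$ the symmetrised summands $Y_i^\rho$ are centred Hermitian contractions, apply an operator Chernoff bound with the relative-entropy rate $J(t)=\tfrac12\bigl((1+t)\ln(1+t)+(1-t)\ln(1-t)\bigr)$, union-bound over $\Irr(G/H_\ell)$ using $\sum_\rho \dim\rho$, and solve for the number of generators. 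Your algebra checks out: with $t=1-\delta$ one gets $J(1-\delta)=\tfrac12\bigl((2-\delta)\ln(2-\delta)+\delta\ln\delta\bigr)$, and requiring $2\bigl(\sum_\rho\dim\rho\bigr)\exp(-\kappa J(1-\delta))\le e^{-b_\ell}$ rearranges to precisely the ceiling defining $\kappa(b_\ell,\ell;\delta)$ in the paper.

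Two small cautionary remarks. First, the crux --- extracting the rate $J(t)$ rather than the weaker Hoeffding exponent --- is correctly identified as the contribution of Christofides--Markström, but you should be aware that their technique is a bespoke sharpening of the matrix-moment-generating-function estimate rather than a direct citation of Ahlswede--Winter; attributing the sharp bound to Ahlswede--Winter as stated would overstate what that inequality gives. Second, there are minor bookkeeping issues (repetitions among the $s_i^{(\ell)}$, elements of order two forcing $s_i=s_i^{-1}$, and the treatment of the identity) that the original paper handles carefully and that your sketch elides; these do not affect the shape of the argument but would need attention in a full write-up.
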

The statement can be rephrased by saying it is highly probable that the Cayley
graph $X(G/H_\ell,\{s_1^{(\ell)},\dotsc,
s_{\kappa(b_\ell,\ell;\delta)}^{(\ell)}\})$ be a $\delta$-expander graph, the
counterpart being that the edge set has very large cardinality.  Note that the
definition of an expander graph we use is not completely equivalent to the
usual definition.  However, it is a standard fact that the (usual) expansion
property and the spectral gap property are closely related notions (see,
e.g.,~\cite[Th.~1.2.3]{DSV03}), which allows us to use our definition harmlessly
for our purposes.

Kowalski~\cite[Chap.~7]{Kow08} successfully combines large sieve techniques
with expansion properties in the setting of random walks on arithmetic groups.
We want to transpose this principle in a combinatorial setting. When adapting
Kowalski's work a non trivial issue comes from the fact that the expansion
property crucial to us is not automatically stable under Cartesian product.
Precisely, \emph{loc. cit.} relies on the fact that if~$S$ is a symmetric
generating system for~${\rm SL}_2(\bZ)$ and if $\pi_d\colon {\rm
SL}_2(\bZ)\rightarrow {\rm SL}_2(\bZ/d\bZ)$ is the reduction modulo~$d$ map for
some~$d\geq 2$, then the whole family of Cayley graphs on~${\rm SL}_2(\bZ)
/\ker \pi_d$ (with respect to the projection of~$S$) indexed by
\emph{squarefree integers} is expanding. In fact it would be enough to have the
same result with an index set replaced by the set of positive integers that are
squarefree and products of at most two primes. (However, for Kowalski's
purposes, considering the primes as the index set would not be sufficient.) To
obtain a suitable combinatorial analogue of this method, the forthcoming lemma
is sufficient.  It shows that expansion properties of Cayley graphs are
preserved, albeit only imperfectly, when one takes the Cartesian product of two
base groups.  The expansion ratio guaranteed by the lemma is strong enough for our purposes.
However we refer the interested reader to~\cite{ALW01} for a much more sophisticated
method that does produce expander ``product Cayley graphs''.  (Recall that the
definition of ``expander graph'' we use slightly differs from the standard
one.)
\begin{lemma}\label{lem:NiceImage}
Let~$\delta\in(0,1/2]$.  With notation as above assume that~$X(G,S)$
and~$X(H,T)$ are $\delta$-expander Cayley graphs on finite Abelian groups~$G$
and~$H$ (with edge set defined by~$S\subseteq G$ and~$T\subseteq H$, respectively).
Then for every~$(x_0,y_0)\in G\times H$ with~$x_0^2=1=y_0^2$, the Cayley graph
${X(G\times H,(S\times\{y_0\})\cup(\{x_0\}\times T))}$ is
a~$((1+\gamma)^{-1}\delta)$-expander graph, where
\[
      \gamma\coloneqq
          \max\left\{\frac{\abs{S\cup S^{-1}}}{\abs{T\cup T^{-1}}},\frac{\abs{T\cup
T^{-1}}}{\abs{S\cup S^{-1}}}\right\}.\]
\end{lemma}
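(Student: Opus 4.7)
The plan is to diagonalize the normalized adjacency operator of the product Cayley graph by characters. Since $G$ and $H$ are finite abelian, so is $G\times H$, and the characters of $G\times H$ are precisely the tensor products $\chi_1\otimes\chi_2$ with $(\chi_1,\chi_2)\in\hat G\times\hat H$; each such character is a simultaneous eigenvector of all translation operators, hence of $\Adj$. Writing $s\coloneqq\abs{S\cup S^{-1}}$ and $t\coloneqq\abs{T\cup T^{-1}}$, and letting $\mu_{\chi_1}$ and $\nu_{\chi_2}$ denote the eigenvalues of $X(G,S)$ and $X(H,T)$ attached to $\chi_1$ and $\chi_2$, a direct computation on the generating set $(S\times\{y_0\})\cup(\{x_0\}\times T)$ yields
\[
\lambda_{\chi_1,\chi_2}\;=\;\frac{s\,\chi_2(y_0)\,\mu_{\chi_1}+t\,\chi_1(x_0)\,\nu_{\chi_2}}{s+t}.
\]
At this point the hypothesis $x_0^2=1=y_0^2$ plays its role: it forces the two scalar factors $\chi_1(x_0)$ and $\chi_2(y_0)$ to belong to $\{-1,+1\}$, so they do not affect moduli and only carry a sign.

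What remains is a short case analysis according to whether each of $\mu_{\chi_1}$ and $\nu_{\chi_2}$ lies in $\{-1,1\}$ (in which case the $\delta$-expander hypothesis says nothing) or not (in which case its modulus is at most $1-\delta$). When neither does, the triangle inequality immediately gives $\abs{\lambda_{\chi_1,\chi_2}}\leq 1-\delta\leq 1-\delta/(1+\gamma)$. When exactly one of them does, the triangle inequality gives $\abs{\lambda_{\chi_1,\chi_2}}\leq 1-\min(s,t)\,\delta/(s+t)$, and the point is that $\min(s,t)/(s+t)$ equals $1/(1+\gamma)$ by the very definition of $\gamma$, so the required bound falls out on the nose.

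The delicate case, on which I expect the main difficulty to lie, is when both $\mu_{\chi_1},\nu_{\chi_2}\in\{-1,1\}$. Then $\lambda_{\chi_1,\chi_2}$ takes the form $(\pm s\pm t)/(s+t)$ with the two signs varying independently, so it equals $\pm 1$ exactly when the signs agree; these eigenvalues have to be identified and set aside, as they are excluded from the spectral gap by definition. In the remaining sub-case the signs disagree and $\abs{\lambda_{\chi_1,\chi_2}}=\abs{s-t}/(s+t)$. Writing the latter in terms of $\gamma=\max\{s/t,t/s\}$ as $(\gamma-1)/(\gamma+1)$, the required inequality $(\gamma-1)/(\gamma+1)\leq 1-\delta/(1+\gamma)$ reduces after cross-multiplication to $\delta\leq 2$, which is ensured by the assumption $\delta\in(0,1/2]$ coming from Theorem~\ref{AlonRoichman}. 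Once this bookkeeping is in place the bound on all remaining eigenvalues is uniform, which is precisely the spectral gap statement to be proved.
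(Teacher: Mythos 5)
Your proposal follows the same core strategy as the paper's proof: diagonalize the normalized adjacency operator of the product Cayley graph via the product characters of $G\times H$, split the resulting eigenvalue sum using the order-$2$ hypothesis on $(x_0,y_0)$, and then bound by the triangle inequality. The derived formula for $\lambda_{\chi_1,\chi_2}$ is exactly the one in the paper.

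Where you diverge is in how you organize the subsequent case analysis, and the difference is not cosmetic. The paper splits according to whether $\chi$ and $\tau$ are trivial, and in the case that both are non-trivial it asserts $\abs{\lambda_{\chi,\tau}}\leq 1-\delta$. That step implicitly assumes that $\abs{\mu_\chi}\leq 1-\delta$ and $\abs{\nu_\tau}\leq 1-\delta$ whenever $\chi,\tau$ are non-trivial; but the $\delta$-expander hypothesis only controls eigenvalues \emph{outside} $\{-1,1\}$, so the claim fails if one of the factor graphs is bipartite (where $\mu_\chi=-1$ can occur for a non-trivial $\chi$). In that scenario the true bound is $1-\delta\cdot\min(s,t)/(s+t)=1-\delta/(1+\gamma)$, which is precisely what the lemma claims but is weaker than what the paper's Case 1 asserts. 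Your case analysis --- organized instead by whether $\mu_{\chi_1}$ and $\nu_{\chi_2}$ lie in $\{-1,1\}$ --- treats the bipartite eventuality explicitly, including the delicate sub-case where both are $\pm 1$ with disagreeing signs, which you correctly reduce to the trivial inequality $\delta\leq 2$. The paper can afford this sloppiness only because in all its applications $1\in S(b,\delta)$, forcing non-bipartiteness; as a free-standing statement, Lemma 1.2 makes no such assumption, so your more careful argument is the one that actually establishes the lemma as stated. The proof is correct.
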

\begin{proof}
For convenience, set $Y\coloneqq(S\times\{y_0\})\cup(\{x_0\}\times T)$,
$S^*\coloneqq S\cup S^{-1}$, ${T^*\coloneqq T\cup T^{-1}}$ and ${Y^*\coloneqq
Y\cup Y^{-1}}$.  The eigenfunctions of the normalized adjacency operator
on~$X(G\times H,Y)$ are of the form
\[
(\chi,\tau)\colon (g,h)\mapsto \chi(g)\tau(h),
\]
for characters~$\chi\in\hat{G}$ and~$\tau\in\hat{H}$. The corresponding
eigenvalues are of the form
\[
\lambda_{\chi,\tau}\coloneqq\frac{1}{\abs{S^*}+\abs{T^*}}\sum_{(g,h)\in Y^*}\chi(g)\tau(h).
\]
Since~$x_0^2=1=y_0^2$ the sum splits as follows:
\begin{equation}
 \left(\abs{S^*}+\abs{T^*}\right)\lambda_{\chi,\tau}=\tau(y_0)\sum_{g\in
S^*}\chi(g)+\chi(x_0)\sum_{h\in T^*}\tau(h).
      \label{eq:lem6}
\end{equation}
We deduce that
\[
\abs{\lambda_{\chi,\tau}}\leq
\frac{\abs{S^*}}{\abs{S^*}+\abs{T^*}}\abs{\frac{1}{\abs{S^*}}
\sum_{g\in S^*}\chi(g)}+\frac{\abs{T^*}}{\abs{S^*}+\abs{T^*}}\abs{\frac{1}{\abs{T^*}}\sum_{h\in
T^*}\tau(h)}.
\]
If both~$\chi$ and~$\tau$ are non-trivial, then $\abs{\lambda_{\chi,\tau}}\leq
1-\delta$ since each of~$X(G,S)$ and~$X(H,T)$ are $\delta$-expanders where
$\delta\in(0,1/2]$.  If $\chi$ is trivial and~$\tau$ is non trivial, we obtain
instead
\[
\abs{\lambda_{\chi,\tau}}\leq 1-\delta\left(1+\abs{S^*}/\abs{T^*}\right)^{-1},
\]
hence the result by symmetry of the roles played by~$G$ and~$H$.
\end{proof}

To better comprehend Lemma~\ref{lem:NiceImage}, we give several examples, which
also allow us to demonstrate the necessity of its hypothesis and the optimality
of the bound given.  For a positive integer~$n$,
we let~$\mathbf{Z}_n$ be the cyclic group of order~$n$.
Consider first the case where both~$G$ and~$H$ are
$\mathbf{Z}_4$, with~$S$ and~$T$ each consisting of a generating element
of~$\mathbf{Z}_4$.  Thus the graphs~$X(G,S)$ and~$X(G,T)$ are isomorphic to the
undirected cycle~$C_4$ with~$4$ vertices. (Recall that $X(G,S)=X(G,S^*)$ by the
definition.) The spectral gap of~$C_4$ is~$1$. Now choose~$x_0$ and~$y_0$ to be
the neutral elements of~$G$ and~$H$, respectively. The hypothesis of
Lemma~\ref{lem:NiceImage} are thus satisfied. Note that~$\gamma=1$, so according
to this lemma, the graph~$X\coloneqq X(G\times H,(\{x_0\}\times T)\cup(S\times \{y_0\})$ should have
spectral gap at least $(1+1)^{-1}\cdot1=\frac12$. To check this, observe
that~$X$ is the $4$-regular graph depicted in Figure~\ref{fig:lem6}: it consists
of two disjoint cycles of size~$8$ the vertices of which are ``linked using
cycles of length~$4$''. This graph indeed has spectral gap exactly~$\frac12$. This can
actually be directly deduced from the proof of Lemma~\ref{lem:NiceImage} by
using~\eqref{eq:lem6}, thereby obtaining
a precise expression for the eigenvalues of the product graph.  More generally,
one deduces that performing the same construction as we just did but starting
from~$\mathbf{Z}_{2k}$ for any integer~$k\ge2$ yields an infinite family of
examples where the bound given by Lemma~\ref{lem:NiceImage} is attained,
showing its optimality. (The spectral gap of the two (isomorphic) starting
graphs will be~$1-\abs{\cos(\pi(k+1)/k)}$ and that of the product graph exactly
half this quantity.)

The hypothesis that $x_0$ and~$y_0$ must be elements with order at most~$2$ in their respective groups
is necessary, as is seen by taking $G\coloneqq\mathbf{Z}_3\times\mathbf{Z}_5=\langle\sigma\rangle\times\langle\tau\rangle$
and~$H$ the dihedral group of order~$6$. Letting~$\mu$ be a generator of~$H$, we set
$S\coloneqq\{\sigma\}$, $T\coloneqq\{\mu\}$, $x_0\coloneqq\sigma\tau$ and~$y_0\coloneqq\pi^3$.
(Thus $y_0$ is of order~$2$ while~$x_0$ is of order~$15$.)
The graph~$X(G,S)$ consists of five disjoint triangles while~$X(H,T)$ consists of two disjoint
cycles of length~$6$. Consequently each of these graphs has spectral gap~$\frac12$. However,
the spectral gap of the graph $X(G\times H,((\{x_0\}\times T)\cup(S\times\{y_0\})))$ is
less than~$0.045$, which is less than~$\frac12\cdot(1+\gamma)^{-1}=\frac14$.

\begin{figure}[!ht]
\begin{center}
\begin{tikzpicture}[vertex/.style={circle, draw=black, fill=black, inner sep=0.5pt, minimum
      size=6pt},edge/.style={thick}]
\draw (0,2) node[vertex] (t) {};
\draw (0,1) node[vertex] (tm) {};
\draw (.5,.72) node[vertex] (tr) {};
\draw (-.5,.72) node[vertex] (tl) {};
\draw (1,0) node[vertex] (rm) {};
\draw (2,0) node[vertex] (r) {};
\draw (-1,0) node[vertex] (lm) {};
\draw (-2,0) node[vertex] (l) {};

\draw (0,-2) node[vertex] (b) {};
\draw (0,-1) node[vertex] (bm) {};
\draw (.5,-.72) node[vertex] (br) {};
\draw (-.5,-.72) node[vertex] (bl) {};

\draw (-1.5,1.5) node[vertex] (dtl) {};
\draw (1.5,1.5) node[vertex] (dtr) {};
\draw (-1.5,-1.5) node[vertex] (dbl) {};
\draw (1.5,-1.5) node[vertex] (dbr) {};

\draw[edge] (t)--(dtr)--(tm)--(dtl)--(t);
\draw[edge] (b)--(dbr)--(bm)--(dbl)--(b);
\draw[edge] (t)--(tr)--(bm)--(tl)--(t);
\draw[edge] (b)--(br)--(tm)--(bl)--(b);
\draw[edge] (dtl)--(l)--(dbl)--(lm)--(dtl);
\draw[edge] (dtr)--(r)--(dbr)--(rm)--(dtr);
\draw[edge] (tm)--(br)--(lm)--(tr)--(bm)--(tl)--(rm)--(bl)--(tm);
\draw[edge] (tl)--(l)--(bl);
\draw[edge] (tr)--(r)--(br);
      \end{tikzpicture}
      \caption{The product graph $X(\mathbf{Z}_4\times\mathbf{Z}_4,\{(1,\sigma),(\sigma,1)\})$, where
      $\sigma$ is a generating element of~$\mathbf{Z}_4$.}\label{fig:lem6}
 \end{center}
\end{figure}
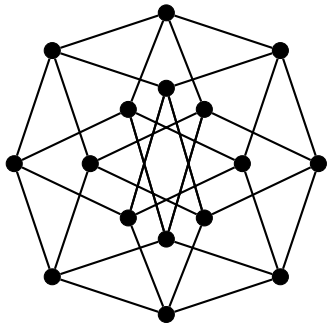

In applications, it is important to keep control of the ``spectral gap loss'',
that is, the size of the parameter~$\gamma$ appearing in
Lemma~\ref{lem:NiceImage}. To do so we need the following technical lemma,
which asserts in a precise quantitative way that it is harmless to suppose that
the random variables~$s_i^{(\ell)}$ take distinct values when evaluated
simultaneously and as long as~$n_\ell$ is fairly larger
than~$\kappa(b_\ell,\ell;\delta)$. This is a simple application of standard
concentration principles.
\begin{lemma}\label{lem:proba}
Keeping notation as above, fix $\ell\in\Lambda$. For readability, $\kappa(b_\ell,\ell;\delta)$ is abbreviated
to $\kappa_\ell$. Let~$X$ be the random variable that counts the number
of distinct values in the multi-set~$\{s_1^{(\ell)},\dotsc,s_{\kappa_\ell}^{(\ell)}\}$.
One has:
\begin{enumerate}[label=\upshape(\alph*),ref={(\alph*)}]
      \item if there exists a positive real $\varepsilon$, \emph{independent
            of~$\ell$}, such that $1-\kappa_\ell/n_\ell>\varepsilon$, then
\[
      \Pe(X<\kappa_\ell/2)\le2\exp(-\kappa_\ell\varepsilon^2/8);
\]\label{it:lemprobai}

\item if $n_\ell\le\kappa_\ell$, then
\[
      \Pe(X<n_\ell/2)\le\binom{n_\ell}{\lceil n_\ell/2\rceil}\cdot2^{-\kappa_\ell}.
\]\label{it:lemprobaii}
\end{enumerate}
\end{lemma}
\begin{proof}
Let~$x_1,\dotsc,x_{n_{\ell}}$ be the elements of~$G/H_\ell$.
For each~$i\in[n_\ell]$, let~$X_i$ be the $0$-$1$ random variable that
is equal to $1$ if $x_i\in\sst{s_j^{\ell}}{1\le j\le n_\ell}$. Notice that
$X=\sum_{i=1}^{n_\ell}X_i$. Consequently, the linearity of expectation implies that
$\Ee(X)=\sum_{i=1}^{n_\ell}\Ee(X_i)$. Moreover, for each~$i\in[n_{\ell}]$,
\[
   \Ee(X_i)=1-\Pe(X_i=0)=1-\left(1-\frac{1}{n_\ell}\right)^{\kappa_\ell}\ge1-\exp(-\kappa_\ell/n_\ell)\ge
   \kappa_\ell/n_\ell-1/2\cdot(\kappa_\ell/n_\ell)^2
\]
so that $\Ee(X)\ge\kappa_\ell-1/2\cdot\kappa_\ell^2/n_\ell$.

Now, since~$X$ is determined by $\kappa_\ell$ independent trials and, for every
possible outcome of the trials changing the outcome of any one trial can
affect~$X$ by at most~$1$, the Simple Concentration Bound~\cite[p.~79]{MR02}
yields that for every positive number~$t$,
\[
   \Pe\left(\abs{X-\Ee(X)}>t\right)\le2\exp\left(-\frac{t^2}{2\kappa_\ell}\right).
\]
Therefore, to prove~\eqref{it:lemprobai} one sets~$t\coloneqq
\kappa_\ell/2\cdot(1-\kappa_\ell/n_\ell)$. This implies that
\[
   \Pe\left(X<\kappa_\ell/2\right)\le2\exp\left(-\frac{\kappa_\ell}{8}(1-\kappa_\ell/n_\ell)^2\right)\le2\exp\left(-\frac{\kappa_\ell\varepsilon^2}{8}\right).
\]

To prove~\eqref{it:lemprobaii} one rather proceeds as follows. Notice that
$X<n_\ell/2$ if and only if there exists a subset~$H'$ of~$G/H_\ell$ of
size~$\lceil n_\ell/2\rceil$ such that $\sst{s_i^\ell}{1\le
i\le\kappa_\ell}\cap H'=\varnothing$. This happens with probability at
most~$2^{-\kappa_\ell}$.  Consequently, we infer that
\[
   \Pe(X<n_\ell)\le\binom{n_\ell}{\lceil n_\ell/2\rceil}\cdot2^{-\kappa_\ell}.
\]
\end{proof}

\subsection{Random walk large sieve: proof of the main result}\label{sub:rw}
We first state an easy consequence of the definition of~$S(b,\delta)$ that is useful in our sieving procedure. 
\begin{lemma}\label{lem:lindisjoint}
For all distinct integers $\ell,\ell'\in\Lambda$, one has
\[
      \rho_\ell\left(S(b;\delta)\right)=S_\ell(b_\ell,\delta)\cup\{1\}\quad\text{and}\quad\rho_{\ell,\ell'}\left(S(b,\delta)\right)=\left(S_\ell(b_\ell,\delta)\cup\{1\}\right)\times \left(S_{\ell'}(b_{\ell'},\delta)\cup\{1\}\right).
\]
\end{lemma}
\begin{proof}
We use condition~(ii) in Definition~\ref{def:adm}: the image by~$\rho_\ell$ of~$S(b,\delta)$ is the product of elements 
all equal to~$1$ except maybe for the $\ell$-factor which can be any element of~$\rho_\ell(\tilde{S}_\ell(b_\ell,\delta)\cup\{1\})$,
that is, any 
element of~$S_\ell(b_\ell,\delta)\cup\{1\}$.

The second equality is obtained using the same argument.
\end{proof}

We now define one last piece of useful notation before starting the proof of
Theorem~\ref{thm:LS}.  For indices~$\ell$ and~$\ell'$ in~$\Lambda$, we set
$G_{\ell,\ell'}\coloneqq G/H_\ell\times G/H_\ell'$ if $\ell\neq \ell'$ and
$G_{\ell}(=G_{\ell,\ell'})\coloneqq G/H_\ell$ otherwise.  The proof
of~Theorem~\ref{thm:LS} is based on an adaptation of that
of~\cite[Prop.~7.2]{Kow08}.
\begin{proof}[Proof of Theorem~\ref{thm:LS}]
Fix a real number~$\delta$ in~$\ioc{0}{1/2}$ and
let us split the probability we are interested in:
\begin{align}\label{DebutMajProb}
&\Pe\left(\forall\ell\in\Lambda_{L},\, \rho_\ell(X_k)\not\in\Theta_\ell\right)\leqslant \Pe(C_0=\infty) \\
&+\Pe\left(\exists \ell\in\Lambda_{L},\,\text{$X(G/H_\ell,\rho_\ell(S(b,\delta)))$ is not a $\delta$-expander}\right) \nonumber\\ 
 &+\Pe\left((C_0<\infty)\wedge (\forall\ell\in\Lambda_{L},\,
\text{$X(G/H_\ell,\rho_\ell(S(b,\delta))$ is a $\delta$-expander and }
\rho_\ell(X_k)\not\in\Theta_\ell)\right).\nonumber
\end{align}

As we shall see, the third summand can be bounded from above using sieving techniques.
Moreover, the second summand can be handled by invoking Theorem~\ref{AlonRoichman}. Indeed,
since
$\rho_\ell(S(b,\delta))=S_\ell(b_\ell,\delta)\cup\{1\}$ by Lemma~\ref{lem:lindisjoint}, we can show that the following statement holds (note that the
statement would be trivial if we were only interested in edge-expansion):
\begin{align*}
\Pe(&\exists\ell\in\Lambda_{L},\,\text{$X(G/H_\ell,\rho_\ell(S(b,\delta)))$
is not a $\delta$-expander})\\
&\le\Pe(\exists
\ell\in\Lambda_{L},\,\text{$X(G/H_\ell,S_\ell(b_\ell,\delta))$ is not a
$\delta$-expander}).
\end{align*}

\noindent
This inequality is a consequence of Lemma~\ref{as:1} that we state and prove at
the end of this section. Applying Theorem~\ref{AlonRoichman} yields that
\[
 \Pe(\exists
\ell\in\Lambda_{L},\,\text{$X(G/H_\ell,S_\ell(b_\ell,\delta))$ is not a
$\delta$-expander})\le\sum_{\ell\in \Lambda_{L}}e^{-b_\ell}.
\]

Let us now turn to the third summand of the right side of~\eqref{DebutMajProb}.
First, notice that
\begin{align*}
&\Pe\left((C_0<\infty)\wedge(\forall \ell\in\Lambda_{L},\,\text{$X(G/H_\ell,\rho_\ell(S(b,\delta)))$ is a $\delta$-expander and $\rho_\ell(X_k)\not\in\Theta_\ell$})\right)\\
\leq & \Pe(\forall \ell\in\Lambda_{L},\,
\rho_\ell(X_k)\not\in\Theta_\ell\mid C_0<\infty\text{ and }\forall \ell\in\Lambda_{L},\,
\text{$X(G/H_\ell,\rho_\ell(S(b,\delta)))$ is a $\delta$-expander}).
\end{align*}

Now we are in a situation close to the axiomatic sieve method developed in~\cite{Kow08}. 
We fix (non-necessarily distinct) indices~$\ell$ and~$\ell'$ in~$\Lambda_{L}$ (defining~$\rho_{\ell,\ell}$ to
be~$\rho_\ell$) and a character~$\lambda$ of~$G_{\ell,\ell'}$
\[
\lambda\colon G\stackrel{\rho_{\ell,\ell'}}{\rightarrow} G_{\ell,\ell'} \stackrel{\lambda_0}{\rightarrow}\mathbf{C}^\times,
\]
factoring through~$G_{\ell,\ell'}$ in such a way that $\lambda_0$ is a non trivial character of~$G_{\ell,\ell'}$.

\par
\medskip
We first prove the following statement.  Assume that $C_0<\infty$ and
$X(G/H_\ell,\rho_\ell(S(b,\delta)))$ is a $\delta$-expander for
every~$\ell\in\Lambda$. We assert that there exists a positive constant~$\nu$
depending only on~$C_0$, the set~$S(b,\delta)$ and the distribution of the
steps~$\xi_j$ (that is, the sequence~$(p_s)$), such that
\begin{equation}\label{eq:Eupperbound}
\abs{\Ee(\lambda(X_k)) }\leqslant (1-\kappa_{2L}^{-2}\nu)^k.
\end{equation}
Consider:
\[
      M\coloneqq\Ee(\lambda(\xi_k))=\sum_{s\in S(b,\delta)}{p(s)\lambda(s)},
\]
which is a well-defined element of~$\mathbf{C}^\times$ since the series defining~$M$ converges absolutely. Let us also consider the
complex number~$M^+\coloneqq1-M$.

Note that~$M$ and~$M^+$ are in fact real numbers since the set~$S(b,\delta)$
as well as the distribution of the steps~$\xi_k$ are symmetric. We also need to define
\[
      N_0\coloneqq\Ee(\lambda(X_0))=\sum_{t\in T}{\Pe(X_0=t)\lambda(t)}\in {\bf C}^\times,
\]
where~$T$ is a fixed (finite) subset of~$G$ containing the starting point~$g_0$ of the random walk~$(X_k)$.
(For simplicity one can assume that $T=\{g_0\}$.)

The random variables~$X_0$ and~$\xi_k$ being independent, it follows that for every positive integer~$k$,
\[
      \Ee(\lambda(X_k))=N_0 M^k.
\]
We have $\abs{N_0}\leq 1$ and we compute
  \begin{align*}
M^+&=\sum_{s\in
  S(b,\delta)}{p_s(1-\lambda(s))}=\sum_{(s',t')\in S_\ell(b_\ell,\delta)\times S_{\ell'}(b_{\ell'},\delta))}\left(\sum_{\substack{s\in S(b,\delta)\\ 
        \rho_{\ell,\ell'}(s)=(s',t')}}p_s\right)(1-\lambda_0(s',t'))\\
&\geqslant\frac{1}{\kappa(b_{2L},2L,\delta)^2}\min_{\psi\neq 1}\max_{(s',t')\in
S_\ell(b_\ell,\delta)\times S_{\ell'}(b_{\ell'},\delta))}(1-\psi(s',t'))\,
  \end{align*}
  where~$\psi$ runs over the non trivial characters of~$G_{\ell,\ell'}$. With the same notation we deduce that for any~$a\in S_\ell(b_\ell,\delta)$ and any~$b\in S_{\ell'}(b_{\ell'},\delta)$ both of order at most~$2$,
\[
      M^+\geq \kappa(b_{2L},2L,\delta)^{-2}\min_{\psi\neq 1}\max_{(s',t')\in
            S_\ell(b_\ell,\delta)\times\{b\}\cup
            \{a\}\times S_{\ell'}(b_{\ell'},\delta))}(1-\psi(s',t')).
\]
Lemma~\ref{lem:NiceImage} asserts that the family of Cayley graphs with vertex
set~$G_{\ell,\ell'}$ and edge set~$S_\ell(b_\ell,\delta)\times\{b\}\cup
\{a\}\times S_{\ell'}(b_{\ell'},\delta))$ is a family
of~$(1+C_0)^{-1}\delta$-expanders as soon as the family
$(X(G_\ell,S_\ell(b_\ell,\delta)))_{\ell\in\Lambda}$ is a family
of~$\delta$-expanders. Thus we can appeal to the translation of Lubotzky's
property~$(\tau)$ into the property of expansion of the corresponding Cayley
graphs (see~\cite[Prop.~2.5]{LZ}) to justify the existence of a positive
constant~$\nu(C_0,S(b,\delta),(p_s))$ which is uniform in~$\ell,\ell'\in\Lambda$
and such that 
\[
      M^+\geq \kappa(b_{2L},2L,\delta)^{-2}\nu(C_0,S(b,\delta),(p_s)).
\]
To conclude the proof of the claim it suffices to observe that, because of our
definition of expansion, the fact that the
family~$X(G_\ell,\rho_\ell(S(b,\delta)))$ is assumed to be a family
of~$\delta$-expanders implies that these Cayley graphs are not bipartite and
hence producing a lower bound for~$1+M$ is not required.
  
\par\medskip
We can now finish the proof by using Kowalski's large sieve inequality~\cite[Prop.~3.7]{Kow08}. We obtain
\begin{align*}
 & \Pe(\forall \ell\in\Lambda_{L},\,  \rho_\ell(X_k)\not\in\Theta_\ell\mid C_0<\infty\text{ and } \forall \ell\in\Lambda_{L}\text{ $X(G/H_\ell,\rho_\ell(S(b,\delta)))$ is a $\delta$-expander})\\
\leqslant &\Delta(X_k; L)\left(\sum_{\ell\in\Lambda_L}\frac{\#\Theta_\ell}{n_\ell}\right)^{-1},
\end{align*}
where one has the theoretical upper bound:
\[
 \Delta(X_k; L)\leqslant \max_{\ell\in\Lambda_{L}}\max_{\chi\in
 \mathcal B_\ell^*}\sum_{\ell'\in\Lambda_{L}}\sum_{\chi'\in \mathcal
 B_{\ell'}^*} \abs{W(\chi,\chi')},
\]
with
\[
 W(\chi,\chi')\coloneqq\Ee\left(\chi\rho_\ell(X_k)\overline{\chi'\rho_{\ell'}(X_k)}\right)=\Ee\left([\chi,\overline{\chi'}]\rho_{\ell,\ell'}(X_k)\right).
\]
Here for any~$\ell\in \Lambda$ we let~$\mathcal B_\ell$ be the
character group of~$G/H_\ell$. We set further $\mathcal B_\ell^*\coloneqq\mathcal
B_\ell\setminus\{1\}$. Finally if $\psi_i$ is a character of a finite Abelian group~$G_i$ for~$i\in\{1,2\}$,
then we let~$[\psi_1,\psi_2]$ be the character~$\pi\otimes\tau$ of~$G\times H$ if $G\neq H$ or of~$G$ otherwise.

In our setting, using~\cite[Lemma~3.4]{Kow08} we deduce that
\[
      [\chi,\overline{\chi'}]\rho_{\ell,\ell'}=\delta((\ell,\pi),(\ell',\tau)){\bf 1}+[\chi\overline{\chi'}]_0\rho_{\ell,\ell'},
\]
where~$\delta(\cdot,\cdot)$ is the Kronecker symbol and~$[\chi,\overline{\chi'}]_0$ is the component of~$[\chi,\overline{\chi'}]$ orthogonal to the trivial character~$\mathbf{1}$. Thus applying~\eqref{eq:Eupperbound} to 
\[
      \lambda\coloneqq [\chi,\overline{\chi'}]_0\rho_{\ell,\ell'}
\]
we obtain
\[
    \abs{\Ee([\chi,\overline{\chi'}]_0\rho_{\ell,\ell'}(X_k))}\le (1-\nu\kappa_{2L}^{-2})^k.
\]
Putting everything together we deduce as wished that
  \begin{align*}
 & \Pe(\forall \ell\in\Lambda_{L},\,
 \rho_\ell(X_k)\not\in\Theta_\ell\mid C_0<\infty\text{ and }\forall \ell\in\Lambda_{L}
  \text{ $X(G/H_\ell,\rho_\ell(S(b,\delta)))$ is a $\delta$-expander})\\
\leqslant
&\left(1+\bigg(\sum_{\ell\in\Lambda_L}n_\ell\bigg)(1-\nu\kappa_{2L}^{-2})^k\right)\left(\sum_{\ell\in\Lambda_L}\frac{\#\Theta_\ell}{n_\ell}\right)^{-1}.
 \end{align*}
\end{proof}

It remains to prove the following statement.
\begin{lemma}\label{as:1}
Let~$G_0$ be an Abelian group and let~$S$ be a subset of~$G_0$.
If $X(G_0,S)$ is a
$\delta$-expander graph, then so is~$X(G_0,S\cup\{1\})$.
\end{lemma}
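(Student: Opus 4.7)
The plan is to exploit Pontryagin duality for the finite Abelian group $G_0$: for any symmetric subset $T\subseteq G_0$, the characters $\chi\in\hat{G_0}$ simultaneously diagonalize the normalized adjacency operator of $X(G_0,T)$, producing real eigenvalues $\lambda_\chi(T)=|T|^{-1}\sum_{t\in T}\chi(t)$. This is the only tool needed.

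First I would dispose of the trivial subcase $1\in S\cup S^{-1}$, in which the two Cayley graphs coincide and there is nothing to prove. In the remaining subcase, setting $n\coloneqq|S\cup S^{-1}|$, the new graph is $(n+1)$-regular and its $\chi$-eigenvalue is the affine image $\lambda'_\chi=(1+n\lambda_\chi)/(n+1)$ of the corresponding eigenvalue $\lambda_\chi$ of $X(G_0,S)$. The affine map $\lambda\mapsto(1+n\lambda)/(n+1)$ fixes $\lambda=1$ and moves every other real value strictly toward $1$, so the trivial characters match up and the non-trivial spectra are in natural bijection. The decisive step is then to verify that $|\lambda'_\chi|\leq 1-\delta$ for each non-trivial $\chi$, knowing $|\lambda_\chi|\leq 1-\delta$. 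Since $\lambda_\chi$ is real, I would split on its sign: when $\lambda_\chi\leq 0$ the image $\lambda'_\chi$ lies in the interval $\icc{(1-n(1-\delta))/(n+1)}{1/(n+1)}$, and a short direct calculation combined with the standing assumption $\delta\leq 1/2$ and $n\geq 1$ shows that both endpoints satisfy the required bound.

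The main obstacle is the case $0<\lambda_\chi\leq 1-\delta$, where the affine map pushes the eigenvalue upward toward $1$. A naïve estimate here yields only $\lambda'_\chi\leq 1-n\delta/(n+1)$, which is strictly weaker than the target $1-\delta$. Closing this last gap is the heart of the lemma; I would expect the actual proof to exploit either the large value of $n$ forced by the Alon--Roichman construction of the generating set (making the loss between $n\delta/(n+1)$ and $\delta$ negligible in the intended regime), or a careful convention for how the loop arising from $1$ is counted in the degree normalization of the adjacency operator. Once this final point is settled, combining the two sign cases yields the conclusion of the lemma.
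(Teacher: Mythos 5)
You correctly reduce the lemma to a character computation and correctly identify the crux: under the natural convention that the added loop contributes $1$ to both the degree and the diagonal of the adjacency operator, the affine update $\lambda\mapsto(1+n\lambda)/(n+1)$ fixes $\lambda=1$ and pushes every eigenvalue toward $1$, so a positive $\lambda$ close to $1-\delta$ is sent strictly past the threshold. Your guess that the paper resolves this via the loop-counting convention is exactly right. Writing $S^*:=S\cup S^{-1}$ (with $1\notin S^*$), the paper takes the loop to contribute $2$ to the degree, so $X(G_0,S\cup\{1\})$ is treated as $(\#S^*+2)$-regular, while the numerator picks up only $\chi(1)=1$; this gives $\lambda'=\frac{\#S^*\lambda+1}{\#S^*+2}=\lambda+\frac{1-2\lambda}{\#S^*+2}$. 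The decisive difference from your map is the fixed point: this one fixes $\lambda=1/2$ rather than $\lambda=1$, and contracts toward $1/2$. The case analysis is then immediate: if $\lambda\geq1/2$ then $1-2\lambda\leq0$ and $\lambda'\leq\lambda\leq1-\delta$; if $\lambda<1/2$ then $\lambda'<1/2\leq1-\delta$, using the standing hypothesis $\delta\leq1/2$. (A symmetric one-line check, not spelled out in the paper, gives $\lambda'\geq-(1-\delta)$ as well.)

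Your instinct that the normalization is delicate is also vindicated and worth recording. With the fully consistent ``loop counted twice'' convention ($A_{vv}=2$ matching the degree contribution of $2$), the formula would instead be $\lambda'=\frac{\#S^*\lambda+2}{\#S^*+2}$; this again fixes $\lambda=1$ and re-opens precisely the gap you found, sending $\lambda=1-\delta$ to $1-\#S^*\delta/(\#S^*+2)>1-\delta$. Likewise your $(n+1)$-regular normalization fails at $\lambda=1-\delta$. So the lemma, read as a statement about all $\delta$-expanders, really does hinge on the particular normalization used in the paper's displayed formula; what makes it safe in context is that the lemma is invoked only with $\delta\in\ioc{0}{1/2}$, which is exactly the inequality driving the second case above.
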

\begin{proof}
The statement is trivially true if $1\in S$, so we assume that $1\not\in
S$. Set~$S^*\coloneqq S\cup S^{-1}$ and~$s^*\coloneqq\#S^*$.
Recall that Definition~\ref{def-exp} implies that
$\delta\in (0,1/2]$.
To prove the statement, it suffices to show that
every non trivial eigenvalue $\lambda'$ of~$X(G_0,S\cup\{1\})$ is such that
$\abs{\lambda'}\le 1/2$ or $\abs{\lambda'}\le\abs{\lambda}$ for some
non trivial eigenvalue $\lambda$ of~$X(G_0,S)$.

Let~$\lambda'$ be a non trivial eigenvalue of~$X(G_0,S\cup\{1\})$.
Using the usual convention according to which a loop contributes~$2$ to the
degree of a vertex, we deduce that
$\lambda'=(2+s^*)^{-1}(\sum_{s\in S^*}\chi(s)+\chi(1))$
for some non trivial character $\chi$ of~$G_0$. Therefore,
\[
\lambda'
=\frac{s^*}{2+s^*}\lambda+\frac{1}{2+s^*}=\lambda+\frac{1-2\lambda}{2+s^*},
\]
where $\lambda\coloneqq(s^*)^{-1}\sum_{s\in S^*}\chi(s)$ is a non trivial eigenvalue
of~$X(G_0,S)$.

Consequently, it is enough to prove that if $\abs{\lambda'}>1/2$, then
$\abs{\lambda'}\le\abs{\lambda}$. Suppose, on the contrary, that
$\abs{\lambda'}>1/2$ and $\abs{\lambda'}>\abs{\lambda}$. Then $\lambda'>0$. Indeed, otherwise
$\lambda\le-1/s^*<0$ and hence
$-\lambda+\frac{2\lambda-1}{2+s^*}>\abs{\lambda}=-\lambda$ implies that $\lambda>1/2$,
a contradiction.

Hence, $\lambda'>1/2$, which yields that $\lambda>1/2$. However, this implies
that $\frac{1-2\lambda}{2+s^*}<0$, so
that $\lambda>\lambda'=\abs{\lambda'}$, contrary to our assumption.
This finishes the proof.
\end{proof}

\section{Illustrative examples}\label{sec:app}
This section illustrate how Theorem~\ref{thm:LS} can be applied to various
classical topics. Let us state two bounds on~$\psi$ that are useful in the forthcoming applications.
An elementary study of the function~$\psi$, which extends continuously to~$\icc{0}{1/2}$, shows that $\psi$ is
increasing on that interval and therefore: 
\begin{equation}\label{eq:boundpsi}
 1.442\dotso\simeq (1/\log 2)\le\psi(\delta)\le 4/\log (27/16)\simeq 7.644\dotso
\end{equation}
for any~$\delta\in (0,1/2]$.
We now present some applications inspired by the classical Ramsey Theory.

\subsection{Towards a quantitative infinite Ramsey theory}\label{sub:InfiniteRamsey}
Our purpose is to illustrate how our method can
be applied in the context of infinite Ramsey theory. Let us first recall the
result we have in mind, established by Ramsey~\cite{Ram30}.  Given a set~$X$
and a non-negative integer~$r$, we define~$X^{(r)}$ to be the collection of
all subsets of~$X$ of cardinality~$r$.

\begin{theorem}[Infinite Ramsey Theorem~\cite{Ram30}]\label{th:InfinteRamsey}
Let~$X$ be some countably infinite set. Let~$c$ and~$r$ be positive
integers. Consider a given colouring~$f\colon X^{(r)}\rightarrow \bZ/c\bZ$ of
the elements of~$X^{(r)}$ in~$c$ different colours. Then there exists some
infinite subset~$A$ of~$X$ such that the function~$f$ is constant on~$A^{(r)}$,
that is, all subsets of~$A$ of cardinality~$r$ have the same image under~$f$.
\end{theorem}

For every function~$f$, the \emph{support} of~$f$ is the set of all elements~$e$ in the
domain of~$f$ such that $f(e)\neq0$.
As in the statement of Ramsey's Theorem, fix positive integers~$c$ and~$r$. As
our base set we choose~$X\coloneqq \bN_{\ge1}$. The set~$\C^{(r)}$ of all possible
$c$-colourings of subsets of cardinality~$r$ of~$X$ may be endowed with a group
structure inherited from that of~$\bZ/c\bZ$.
Explicitly, the addition of two elements~$f$ and~$g$ of~$\C^{(r)}$ is
formally defined by
\[
      f+g\colon X^{(r)}\rightarrow {\bZ/c\bZ},\qquad A\mapsto f(A)+g(A).
\]
The neutral element is the function that is identically~$0$.

We also fix an auxiliary positive integer~$j$ and we set~$\Lambda\coloneqq\bN_{\ge1}$.
We consider the subsets~${I_\ell^{(r,j)}\coloneqq \{(r+j)(\ell-1)+1,\dotsc,
(r+j)\ell\}}$ of~$X$ indexed by~$\ell\in \Lambda$.
If $j$ and~$r$ are fixed, then $I_{\ell}^{(r,j)}$ is an
integral interval of size~$r+j$ and different indices~$\ell$ and~$\ell'$ give
rise to disjoint intervals~$I_\ell^{(r,j)}$ and~$I_{\ell'}^{(r,j)}$.
For~$\ell\in\Lambda$, let~$E_\ell^{(r)}$
be the set of subsets of size~$r$ of~$I_\ell^{(r,j)}$. Let~$C_\ell$ be the
collection of all colourings supported on~$E_\ell^{(r)}$ and let~$H_\ell$ be the
subgroup of all colourings of~$\C^{(r)}$ supported on the complement of~$E_\ell^{(r)}$ in~$X^{(r)}$.
This way $C_\ell$ is a set of representatives for the
quotient~$\C^{(r)}/H_\ell$. Indeed,
no two distinct functions in~$C_\ell$ are congruent modulo an
element of~$H_\ell$.  Moreover, for any~$f\in\C$, let~$f_C$ be the coloring
equal to~$f$ on~$E_\ell^{(r)}$ and equal to~$0$ everywhere else.
It follows that $f_C\in C_\ell$ and $f-f_C\in H_\ell$, or equivalently $f\equiv f_C (\bmod
H_\ell)$. 
Let~$\rho_\ell\colon\C^{(r)}\rightarrow \C^{(r)}/H_\ell$ be the canonical surjection.
The disjointness of the sets~$I_\ell^{(r,j)}$ readily implies that
$(H_\ell,C_\ell)$ is an admissible local sequence for~$\C^{(r)}$. In addition, we note
that $\abs{E_\ell^{(r)}}=\binom{r+j}{r}$. Summing-up, we thus established the following statement.
\begin{lemma}\label{lem:coloringInfiniteRamsey}
The sequence~$(H_\ell,C_\ell)$ is an admissible local sequence for~$\C^{(r)}$ and
\[
      \forall\ell\in\Lambda,\quad
n_\ell\coloneqq (\C^{(r)}:H_\ell)=\#
      C_\ell=c^{\binom{r+j}{r}}.
\]
\end{lemma}

We may now define on~$\C^{(r)}$ a random walk~$(X_k)$
that satisfies the requirements of Theorem~\ref{thm:LS}. We then ask the
question:

\noindent
\emph{at which speed do we reach a colouring~$X_k$ of the $r$-element subsets
of~$X$ that exhibits a subset~$A\subseteq X$ of size~$r+j$, all the
$r$-element subsets of which have the same colour?}

\noindent The next statement answers that question.
\begin{theorem}\label{th:InfinteRamseyLike}
Let~$(X_k)$ be the random walk defined on~$\C^{(r)}$ as in
Subsection~\ref{sub:rw}, with
$\tilde{S}_\ell(b,\delta)\subseteq C_\ell$.  Fix positive integers~$j$, $r$, $c$
and a positive real number~$\eps$.
Then for every positive integer~$k$,
\[
     \Pe\left(\text{No element of\, $\bN^{(j+r)}$ has all its $r$-element
     subsets of the same colour in~$X_k$}\right)
\ll k^{-1/2+\eps},
\]
where the implied constant depends only on~$\eps$, $j$, $r$, $c$, $C_0$,
$S(b,\delta)$, and the sequence~$(p_s)$. This constant could be explicitly
computed in terms of~$\eps$, $j$, $r$, $c$, and the constant~$\nu$ of
Theorem~\ref{thm:LS}. As a function of~$j$, this constant is unbounded.
\end{theorem}
\begin{proof} According to Lemma~\ref{lem:coloringInfiniteRamsey}, we know that
      $n_\ell$ is independent of~$\ell$ since $n_\ell=c^{\binom{r+j}{r}}$.  Let us
      set~$b_\ell\coloneqq\ell$ for all~$\lambda\in\Lambda$.  Thus
      $\kappa(b_\ell,\ell;\delta)\geq\psi(\delta)(\log n_\ell+\ell+\log 2)$, where
      $\psi$ is defined by~\eqref{eq:psi}. Thus $n_\ell\leq
      \kappa(b_\ell,\ell;\delta)$ if $\ell$ is large enough, e.g.,~if $\ell\ge
      L_0\coloneqq c^{\binom{r+j}{j}}$. Moreover, if for each~$\ell\in\Lambda_L$
      the set~$S_\ell(b_\ell;\delta)$ contains at least $n_\ell/2$ distinct
      elements, then~$C_0\le2$.  Therefore, applying Lemma~\ref{lem:proba} we
      deduce that for for every~$L\geq L_0$, \begin{align*} \Pe(C_0=\infty)&\le
            \Pe(\exists \ell \in \Lambda_L, \, \# S_\ell(b_\ell;\delta)<
            n_\ell/2)\\ &\le
            \phi_0(c,r,j)\cdot\sum_{\ell\in\Lambda_L}2^{-\kappa(b_\ell,\ell;\delta)},
      \end{align*} where~$\phi_0(c,r,j)$ is a number depending only on~$c$, $r$
      and~$j$.  Next, as $\ell\le\kappa(b_\ell,\ell;\delta)$ we deduce that
\[
      \forall\ell\ge L_0,\quad \Pe(C_0=\infty)\le \phi_0(c,r,j)2^{-L+1}.
\]
Fix a positive integer~$k$ and a positive real number~$\eps$. 
For each~$\ell\in \Lambda$, we set
\[
   \Theta_\ell\coloneqq\{g\in\C^{(r)}/H_\ell\colon \text{the only representative
   of~$g$ in~$C_\ell$ is constant on~$E_\ell^{(r)}$}\}.
\]
Of course, $\#\Theta_\ell/n_\ell=c/n_\ell=c^{-\binom{r+j}{r}+1}$. Before going further, we note the existence of a
constant~$\psi_1(c,r,j)$ depending only on~$c$, $r$ and~$j$ such that 
$\kappa(n_\ell,b_\ell;\delta)\le 8\ell+\psi_1(c,r,j)$.
Lemma~\ref{lem:coloringInfiniteRamsey} ensures that the hypotheses
of Theorem~\ref{thm:LS} are satisfied. Thus, abbreviating~$\kappa_\ell(b_\ell,\ell;\delta)$ as~$\kappa_\ell$, we obtain
\begin{align*}
\Pe(\rho_\ell(X_k)\not\in\Theta_\ell,\,\forall \ell\in \Lambda_{L})-\Pe(C_0=\infty)
&\le \left(1+c^{\binom{r+j}{r}}\abs{\Lambda_L}(1-\nu\kappa_{2L}^{-2})^k\right)\left(\sum_{\ell=L}^{2L}\frac{\abs{\Theta_\ell}}{n_\ell}\right)^{-1}\\
&\quad\quad+\sum_{\ell=L}^{2L}e^{-b_\ell}\\
&\le\left(1+(L+1)\cdot c^{\binom{r+j}{r}}(1-\nu\kappa_{2L}^{-2})^k\right)\frac{c^{\binom{r+j}{r}}-1}{L}\\
&\quad\quad +e^{1-L}-e^{-2L}\\
&\le\frac{c^{\binom{r+j}{r}}}{L}+2c^{2\binom{r+j}{r}}(1-\nu(16L+\psi_1(c,r,j))^{-2})^k,
\end{align*}
where we use the inequality~$e^{1-x}-e^{-2x}\le1/x$ if $x>0$.
For any fixed~$\eps>0$, set~$L\coloneqq\lceil k^{1/2-\eps}\rceil$. For this to be compatible with
the condition~$L\ge L_0$, it is enough that $k^{1/2-2\eps}\ge c^{\binom{r+j}{r}}$. This inequality
can be made to hold by modifying the implied constant in the estimate proven, thereby making it depend
also on~$\eps$.
We compare the order of magnitude of 
the two summands of the above right side with the upper bound obtained for~$\Pe(C_0=\infty)$. First, one has
\[
\Pe(C_0=\infty)\ll_{L_0} k^{-1/2+\eps}.
\]
Also, since $L\geq 1$ we know that $16L+\psi_1(c,r,j)\le (16+\psi_1(c,r,j))L$, so
\begin{align*}
(1-(16L+\psi_1(c,r,j))^{-2}\nu)^k&=\exp\left(-\frac{\nu}{(16+\psi_1(c,r,j))^2}k^{2\eps}+\bigo{\nu k^{-1+4\eps}}\right)\\
&\ll \exp\left(-\frac{\nu}{(16+\psi_1(c,r,j))^2}k^{2\eps}\right),
\end{align*}
with an absolute implied constant. We thus obtainthe upper bound
\[
   \varphi(\eps,r,j,c,C_0,S(b,\delta),(p_s))k^{-1/2+\eps}
\]
for the probability investigated, where
$\varphi(\eps,r,j,c,C_0,S(b,\delta),(p_s))$ is a positive constant depending only on the tuple~$(\eps,r,j,c,C_0,S(b,\delta),(p_s))$. This finishes the proof.
\end{proof}

\begin{remark}
We point out an important limitation to
our approach: we cannot dispense of the use of the auxiliary parameter~$j$.  More
precisely, letting~$j$ tend to infinity in the inequality of
Theorem~\ref{th:InfinteRamseyLike} yields only a trivial upper bound for the
probability investigated (this comes from the unboundedness of the implied
constant in Theorem~\ref{th:InfinteRamseyLike} as a function of~$j$).
\end{remark}

\subsection{Monochromatic Solutions to Equations}\label{sub:sol}
It also seems relevant to study solutions of an equation through the
perspective of Ramsey Theory: can one destroy the solutions of an equation by
partitioning the different values the variables can take? We are interested in the
following question, which turns out to fit our setting.

\noindent
\emph{Given a random $c$-colouring of a random subset~$A$ of~$\mathbf{Z}$, what is
the probability that $A$ contains a monochromatic non-empty subset summing to~$0$?}

\noindent
We study this question in two steps. First we leave
aside colourings and just bound the probability that a random subset of~$\bZ\setminus\{0\}$
contains no subset summing to~$0$. To this end, the group~$G$ considered is
that of all subsets of~$\bZ\setminus\{0\}$ with the symmetric difference $\Delta$ as group
law.  We then show how easily one can add constraints on colourings to this setting, by just
considering the product of the group~$G$ with the group of all $c$-colourings
of~$\bZ\setminus\{0\}$. So in our setting the coloured version essentially reduces
to the first question.

Let~$G$ be the group consisting of all subsets of~$\bZ\setminus\{0\}$ endowed
with the symmetric difference.  For each positive integer~$\ell$ (i.e.,~we choose
$\Lambda\coloneqq\bN_{\ge 1}$), we set~$I_\ell\coloneqq\{-\ell,\ell\}$,
$C_\ell\coloneqq2^{I_\ell}$ and we define~$H_\ell$ to be the subgroup of~$G$
consisting of all subsets of~$\bZ\setminus\{0\}$ that are disjoint from~$I_\ell$.
Thus $C_\ell$ forms a set of representatives for~$G_\ell\coloneqq G/H_\ell$.  In
particular, $n_\ell\coloneqq[G:H_\ell]=4$ and $(H_\ell,C_\ell)_{\ell\ge1}$ is an
admissible local sequence for~$G$ (since the sets~$I_\ell$ are pairwise disjoint).

\noindent
We set
\[\Theta_\ell\coloneqq\{X\in G_\ell\colon \forall \tilde{X}\in G,\quad\rho_\ell(\tilde{X})=X \Rightarrow
\sum_{x\in \tilde{X}}x=0\},
\]
so $\Theta_\ell$ is a singleton, the unique element of which is represented by~$I_\ell$.
Now one can define a random walk~$(X_k)$ as in Subsection~\ref{sub:rw}. This
random walk readily satisfies the requirements of Theorem~\ref{thm:LS}.
Further, observe that if a subset~$S$ of~$\bZ$ does not contain a non-empty
subset summing to~$0$, then neither does the intersection of~$S$ with any
fixed subset. Thus the probability~$P_k$ that $X_k$ does not contain a
non-empty subset summing to~$0$ is at most
\[
\Pe(\rho_\ell(X_k)\notin\Theta_\ell,\,\forall\ell).
\]
We choose $b_\ell=\ell$ for all~$\ell\in \Lambda$ and apply the same method as in the proof of 
Theorem~\ref{th:InfinteRamseyLike} to bound~$\sum_{\ell\in\Lambda_L}b_\ell$ from above.
Since $\abs{\Theta_\ell}/n_\ell=\frac{1}{4}$ for each positive integer~$\ell$,
Theorem~\ref{thm:LS} implies that for every positive
integer~$L$ and every positive integer~$k$,
\[
   P_k - \Pe(C_0=\infty)\le\frac{1}{L}+\left(1+(L+1)\max_{L\le\ell\le2L}\abs{G_\ell}(1-\nu\kappa_{2L}^{-2})^k\right)\cdot\frac{4}{L}
       =\frac{5}{L}+8(1-\nu\kappa_{2L}^{-2})^k.
\]
We have $\kappa(b_\ell,\ell;\delta)\ge \ell$ thus $\kappa(b_\ell,\ell;\delta)\ge n_\ell$ whenever~$\ell\ge 4$. Applying 
Lemma~\ref{lem:lindisjoint} we obtain in the same way as before
\[
\Pe(C_0=\infty)\leq 6\sum_{\ell=L}^{2L}2^{-\ell},
\]
for any choice of~$L\geq 4$. Observing that $\kappa_{2L}\le 32 L$ by~\eqref{eq:boundpsi}, and setting~$L\coloneqq \lceil k^{1/2-\eps}\rceil$ we compute as 
in the proof of Theorem~\ref{th:InfinteRamseyLike},
\[
(1-\nu\kappa_{2L}^{-2})^k\ll_\nu \exp\left(-\frac{\nu k^{2\eps}}{32^2}\right).
\]

Consequently we infer the following statement
\begin{theorem}\label{th:zerosum}
Let~$(X_k)$ be a random walk on~$G$ defined as in Subsection~\ref{sub:rw}
using~$S(b,\delta)$, with~$\tilde{S}_\ell(b,\delta)\subseteq2^{I_\ell}$. 
Then for all~$\eps>0$ there exists 
a positive constant~$C_\eps$ (that can be computed explicitly as a function of~$\eps$ and $\nu$) depending 
only on~$\eps$, $S(b,\delta)$, $C_0$, and the sequence~$(p_s)$, such that for every positive integer~$k$
\[
   \Pe(\text{$X_k$ does not contain a non-empty subset summing to
   $0$})\leq C_\eps k^{-1/2+\eps}.
\]
\end{theorem}

Let us now see how to deal with the coloured version, that is, we want to
upper bound the probability that our random $c$-coloured subset does not
contain a \emph{monochromatic} non-empty subset summing to~$0$, where~$c$ is
an integer greater than~$1$.  It suffices to work in the product group
$G\coloneqq (2^{\bZ\setminus\{0\}},\Delta)\times\{f\colon\bZ\setminus\{0\}\to\bZ/c\bZ\}$. For each
positive integer~$\ell$ (i.e.,~we choose $\Lambda\coloneqq\bN_{\ge1}$), the subgroup~$H_\ell$ is defined to be
\[
      2^{\bZ\setminus
(I_\ell\cup \{0\})}\times\{f\colon\bZ\setminus\{0\}\to\bZ/c\bZ\colon f(-\ell)=f(\ell)=0\}
\]
where~$I_\ell$ is~$\{-\ell,\ell\}$ as before.

Thus $n_\ell\coloneqq[G:H_\ell]=4\cdot2^c=2^{c+2}$, which does not depend on~$\ell$.
A set of representatives for~$G_\ell\coloneqq G/H_\ell$ is
   $2^{I_\ell}\times \mathscr{F}_\ell$
where \[
\mathscr{F}_\ell\coloneqq
\{f\colon\bZ\setminus\{0\}\to\bZ/c\bZ\,:\, f\rest(\bZ\setminus (I_\ell\cup\{0\}))=0\}.
\]
Again since the sets~$I_\ell$ are pairwise disjoint 
the sequence~$(H_\ell,\mathscr{F}_\ell)$ is an admissible local sequence for~$G$.

Defining~$\Theta_\ell$ to be~$\{I_\ell\}\times
\{f\colon\bZ\setminus\{0\}\to\bZ/c\bZ\,:\,\text{$f$ is constant}\}$, it follows that
$\abs{\Theta_\ell}/n_\ell=c2^{-c-2}$. Since the hypotheses of
Theorem~\ref{thm:LS} are satisfied, one obtains the following statement.
(The proof goes along the same lines as that of Theorem~\ref{th:zerosum} --- in particular we choose~$b_\ell=\ell$ and, for any fixed~$\eps>0$, we set~$L\coloneqq\lceil k^{1/2-\eps}\rceil$. Details are omitted.)
\begin{theorem}\label{th:ColZeroSum}
Let~$(X_k)$ be a random walk on~$G$ defined as in Subsection~\ref{sub:rw}
using~$S(b,\delta)$, with
$\tilde{S}_\ell(b,\delta)\subseteq2^{I_\ell}\times\mathscr{F}_\ell$.
Then for every positive real number~$\eps$ and every positive integer~$k$, one has 
\[
   \Pe(\text{$X_k$ does not contain a monochromatic non-empty subset summing to
   $0$})\ll_\eps k^{-1/2+\eps},
\]
where the implied constant could be explicitly computed as a function of~$\eps$, $c$ and~$\nu$ (with notation as in Theorem~\ref{thm:LS}) and depends only on~$\eps$, $c$, $S(b,\delta)$, $C_0$ and the sequence~$(p_s)$.
\end{theorem}

At this point, it seems relevant to also discuss Ramsey theory for graphs.

\subsection{Looking for Monochromatic Triangles}\label{sub:monotri}
We let~$\mathscr{G}$ be the (countable) infinite complete graph, that is, the
graph with vertex set~$\bN$ in which every two distinct positive integers are
neighbours.  We fix an integer~$c\ge3$ and we define~$\C$ to be the collection
of all functions from the edges of~$\mathscr{G}$ to $\bZ/c\bZ$.  
As earlier, the set~$\C$ is naturally endowed with a group structure inherited from
that of~$\bZ/c\bZ$.

We are interested in monochromatic substructures of a given fixed size that
may arise.  Specifically, to avoid unnecessary notation and abstraction, we
shall focus on finding monochromatic triangles --- though our strategy could be
adapted effortlessly to the question of detecting monochromatic $r$-cliques or
$r$-cycles for~$r\geq 3$.

We define a family of subgroups~$(H_\ell)_{\ell\in\Lambda}$ of~$\C$, where
$\Lambda\coloneqq\bN$. Consider a partition \emph{in finite parts}
$(I_\ell)_{\ell\in\Lambda}$ of~$\Lambda$. We set~$i(\ell)\coloneqq\abs{I_\ell}$
for~$\ell\in\Lambda$.  Let $E_\ell\coloneqq\{(a,b)\in I_\ell^2\colon a\neq b\}$,
that is, $E_{\ell}$ is the set of all edges of~$\mathscr{G}$ with both endvertices
contained in~$I_\ell$.  We define~$C_\ell$ to be the collection of all
functions~$f\in\C$ with support contained in~$E_\ell$.  Then $H_\ell$ is the
collection of all functions~$f\in\C$ such that $f\rest{E_\ell}\equiv0$.

Let us give the necessary properties that the quotients~$\C_\ell\coloneqq \C/H_\ell$ satisfy.
\begin{lemma}\label{lem:coloring}
 For each $\ell\in\Lambda$, the following holds.
    \begin{enumerate}
         \item[(i)] $C_\ell$ is a set of representatives of the quotient $\C_\ell$ and $(H_\ell,\C_\ell)$ is an admissible local sequence for $\C$;
            and
         \item[(ii)] the index of~$H_\ell$ in~$\C$ is
            $n_\ell\coloneqq[\C:H_\ell]=\abs{C_\ell}=c^{i(\ell)(i(\ell)-1)/2}$.
\end{enumerate}
\end{lemma}
\begin{proof}
(i) No two distinct functions in~$C_\ell$ are congruent modulo an
element of~$H_\ell$.  Moreover, for any $f\in\C$, let~$f_C$ be the function
equal to~$f$ on~$E_\ell$ and equal to~$0$ everywhere else, that is,
$f_C\rest{E_\ell}\coloneqq f\rest{E_\ell}$ and
$f_C\rest{(E(\mathscr{G})\setminus E_\ell)}\coloneqq0$. It follows that
$f_C\in C_\ell$ and $f-f_C\in H_\ell$, or equivalently $f\equiv f_C (\bmod
H_\ell)$. Finally the fact that $(H_\ell,C_\ell)$ is an admissible local sequence for 
$\C$ is a consequence of the disjointness of the sets~$I_\ell$.

(ii) By the definition, $\abs{E_\ell}=i(\ell)(i(\ell)-1)/2$.  The
conclusion follows.
\end{proof}

A practical way to rephrase part of the proof of Lemma~\ref{lem:coloring} is
to say that for each fixed integer~$\ell$ in~$\Lambda$ and each element~$f$ of~$\C$,
the unique element in~$C_\ell$ congruent to~$f$ modulo~$H_\ell$ is the
function equal to~$f$ on~$E_\ell$ and to~$0$ outside of~$E_\ell$.

From now on, we assume that $i(\ell)\ge3$ for~$\ell\in\Lambda$.  For each
integer~$\ell\in\Lambda$, let~$\Theta_\ell$ be the set of classes
$\bar{f}\in\C_\ell$ such that the unique representative~$f$ of~$\bar{f}$
in~$C_\ell$ (the existence of which is asserted by Lemma~\ref{lem:coloring})
contains a monochromatic triangle in~$E_\ell$. In other words
$f\in\Theta_\ell$ if and only if $I_\ell$ contains three integers~$i_1$, $i_2$
and~$i_3$ such that $f((i_1,i_2))=f((i_1,i_3))=f((i_2,i_3))$. Observe that
$\abs{\Theta_\ell}/\abs{\C_\ell}\ge c^{-2}$. Indeed any function that
restricts to a constant map (with values in~$\bZ/c\bZ$) on a fixed triangle
contained in~$E_\ell$ surjects to an element of~$\Theta_\ell$ \emph{via}~$\rho_\ell$.

Assume that $\delta$ is a fixed real number in~$(0,1/2]$. We set~$b_\ell\coloneqq\ell$. In particular, note that
\[
\kappa(b_\ell,\ell;\delta)=\left\lceil\psi(\delta)\cdot\left(\frac{i(\ell)(i(\ell)-1)\log
c}{2}+\ell+\log2\right)\right\rceil.
\]

Given~$f^{(\ell)}\in S_\ell(b_\ell,\delta)$, we define~$\tilde{f}^{(\ell)}$ to
be its canonical representative in~$\C$, that is, $\tilde{f}^{(\ell)}\in
C_\ell$. In this context, the outcome of Theorem~\ref{thm:LS} is the following.
\begin{proposition}\label{prop:coloring}
Let~$(X_k)$ be a random walk on~$\C$ defined as in
Subsection~\ref{sub:rw} using~$S(b,\delta)$ (see~\eqref{def:S}) with~$\tilde{S}_\ell(b,\delta)\subseteq C_\ell$.
Then with notation as in Theorem~\ref{thm:LS}, one has for any fixed positive integers~$L$ and~$k$
\begin{align*}
   \Pe(\text{$X_k$ does not contain a monochromatic triangle})&\le \Pe(C_0=\infty)+   \frac{c^2+1}{L}\\
&+2c^{(1/2)\cdot i(2L)(i(2L)-1)+2}(1-\kappa_{2L}^{-2}\nu)^k.
\end{align*}
\end{proposition}
\begin{proof}
Fix a positive integer~$k$. Lemma~\ref{lem:coloring} ensures that the
hypotheses of Theorem~\ref{thm:LS} are satisfied.  We obtain, applying
Theorem~\ref{thm:LS},
\begin{align*}
   \Pe(\rho_\ell(X_k)\not\in\Theta_\ell,\,\forall \ell\in \Lambda_{L})&-\!\!\Pe(C_0=\infty)
   \le\left(1+\left(\sum_{\ell\in\Lambda_L}n_\ell\right)\!\!(1-\kappa_{2L}^{-2}\nu)^k\right)\left(\sum_{\ell\in\Lambda_L}\frac{\abs{\Theta_\ell}}{n_\ell}\right)^{\!\!-1}\\&+\sum_{\ell=L}^{2L}e^{-b_\ell}\\
  &\le e^{1-L}-e^{-2L}+\left(1+(L+1)\cdot c^{i(2L)(i(2L)-1)/2}(1-\kappa_{2L}^{-2}\nu)^k\right)\cdot\frac{c^2}{L}\\
   &\le\frac{c^2+1}{L}+2c^{(1/2)\cdot i(2L)(i(2L)-1)+2}(1-\kappa_{2L}^{-2}\nu)^k,\\
\end{align*}
where we used that $e^{1-x}-e^{-2x}\le 1/x$ for~$x\ge1$.
\end{proof}

Different choices of sets~$I_\ell$ may correspond to different speeds of
rarefaction of non-typical structures. (We note, however, that the random walk itself does depend
on the choice made for the sets~$I_\ell$.) More precisely, one can put additional
constraints on the structure of the monochromatic triangles, e.g.,~we 
may impose the three vertices to be consecutive integers as in the following
theorem.
\begin{theorem}\label{th:col1}
With notation as in Proposition~\ref{prop:coloring}, one has for every positive real number~$\eps$ and 
for every positive integer~$k$,
\begin{align*}
   & \Pe(\text{$X_k$ does not contain a monochromatic
  triangle})\\
  \le &\Pe(\text{$X_k$ does not contain a monochromatic
  triangle on three consecutive vertices})\\
  \ll_\eps & k^{-1/2+\eps},
\end{align*}
where the implied constant 
can computed explicitly (as a function of~$\eps$, $c$ and~$\nu$ (see Theorem~\ref{thm:LS})), and depends only on~$\eps$, $c$, $S(b,\delta)$, $C_0$,  and the sequence~$(p_s)$.
\end{theorem}
\begin{proof}
Set~$I_\ell\coloneqq\{3\ell-2,3\ell-1,3\ell\}$ for each~$\ell\in\Lambda$. In
particular $i(\ell)(i(\ell)-1)=6$. 
 Let us evaluate~$\Pe(C_0=\infty)$. One has
    $\#S_\ell(b_\ell;\delta)\le n_\ell=c^3$ and $\kappa(b_\ell,\ell;\delta)=
    \lceil\psi(\delta)(3\log c+\ell+\log 2)\rceil\geq \ell$, by~\eqref{eq:boundpsi}.
     In particular $n_\ell\leq \kappa(b_\ell,\ell;\delta)$ for all $\ell\geq c^3$. Moreover if we assume that for all
     $\ell\in\Lambda_L$ the set $S_\ell(b_\ell;\delta)$ contains at least $n_\ell/2=c^3/2$ distinct elements then $C_0\le 2$. Therefore
    \begin{align*}
    \Pe(C_0=\infty)&\le \Pe(\exists \ell \in \Lambda_L, \, \# S_\ell(b_\ell;\delta)< n_\ell/2)\\
    &\le \binom{c^3}{\lceil \frac{c^3}{2}\rceil}\sum_{\ell\in\Lambda_L}2^{-\kappa(b_\ell,\ell;\delta)},
    \end{align*}
    for all~$L\ge c^3$, by virtue of Lemma~\ref{lem:proba}.

 For~$\eps>0$ fixed, set~$L\coloneqq \lceil k^{1/2-\eps\rceil}$. For this to be compatible with the condition~$L\geq c^3$ 
 we need to have $k^{1-2\eps}\geq c^6$. This inequality can be made to hold by modifying the implied constant 
 in the estimate to be proven.
  As in the proof of Theorem~\ref{th:InfinteRamseyLike} we have
    \[
    \Pe(C_0=\infty)\ll_c k^{-1/2+\eps}.
    \]
    Moreover Proposition~\ref{prop:coloring} implies that
\begin{align*}
 &  \Pe(\text{$X_k$ does not contain a monochromatic
   triangle on three consecutive vertices})\\
   &\le\Pe(C_0=\infty)+\frac{c^2+1}{k^{1/2-\eps}}+2c^{5}(1-\nu\kappa_{2L}^{-2})^k.
\end{align*}
To find an upper bound for the third summand we first use the assumption~$L\geq c^3$ to deduce $\kappa_{2L}\le 46L$ and then 
we compute
\[
(1-\nu\kappa_{2L}^{-2})^k=\exp\left(k\left(-\frac{\nu}{46^2k^{1-2\eps}}\right)+O(\nu k^{-2+4\eps})\right)\ll_{c,\nu}\exp\left(-\frac{\nu}{46^2}k^{2\eps}\right),
\]
which finishes the proof.
\end{proof}

We note that the contributions from the non-standard case (that is,
$X(G/H_{\ell},S_\ell(b_\ell;\delta))$ is not an expander) is the probability with highest order of magnitude (among the three 
summands in the upper bound of Theorem~\ref{thm:LS})
given our choice of parameters in the proof of the theorem.  It is natural to compare Theorem~\ref{th:col1} with what is known from
Ramsey theory; this discussion is deferred to the next section.

\section{Remarks and Further Applications}\label{section:remarks}
As mentioned earlier, the main purpose of our work is to obtain a general sieve
statement in a purely combinatorial setting. Regarding the illustrative
applications, the general line of thought is to give, for the intricate notion of
randomness defined, explicit upper bounds for probabilities that we expect to be
small.

Let us underline some peculiarities of the application developed in
Subsections~\ref{sub:sol} and~\ref{sub:monotri}. For monochromatic substructures, it follows from Ramsey's
theorem~\cite{Ram30} that for every fixed positive integer~$c$, there exists
an integer~$N$ such that if $n\ge N$, then every $c$-colouring of the edges of
the complete graph~$K_n$ on~$n$ vertices contains a monochromatic triangle.
Alon and R\"odl~\cite{AlRo05} established that the smallest such integer~$N$ is
$\Theta(3^{c})$ as $n$ tends to infinity (that is, there exist two constants~$\rho$ and~$\rho'$ such that for sufficiently large~$n$,
this value belongs to~$[\rho\cdot 3^c,\rho'\cdot3^c]$). In our setting, although the infinite
complete graph is involved, only finite subgraphs of it are checked for the
existence of monochromatic triangles. These subgraphs are not necessarily
large enough for Ramsey's theorem to apply. In addition, we only consider
monochromatic triangles with vertices contained in some prescribed set~$I_{\ell}$.

Another feature of the applications presented is
uniformity of the decay rate with respect to the number~$c$ of colors involved. Actually,
we even claim control of the dependency of the implied constant as a function of~$c$,
since this implied constant could be explicitly computed. No such
uniformity holds in the context of Ramsey theory. Indeed, as already
mentioned, Alon and R\"odl's theorem~\cite{AlRo05} asserts that the number of
required vertices for Ramsey's theorem to hold grows exponentially fast with~$c$.

Next let us comment on the common decay rate, roughly~$1/\sqrt{k}$, in our various applications.
When applying Theorem~\ref{thm:LS}, we always have to find an upper bound of the rough form
\[
 \sum_{L\leq\ell\leq 2L}2^{-\ell}+c_1\sum_{L\leq\ell\leq 2L} 2^{-c_2\ell}+\left(1+c_3L\left(1-\frac{c_4\nu}{L^2}\right)^k\right)\frac{c_5}{L},
\]
where each parameter~$c_i$ is an absolute constant.
 
The fast decay of the first two summands is not an issue as soon as~$L$ is chosen
to be roughly equal to some power of~$k$.  However in the third summand one has to
have simultaneously~$L\rightarrow \infty$ and $(1-c_4\nu L^{-2})^k\rightarrow 0$,
as $k\rightarrow \infty$. These constraints justify the choice~$L=\lceil
k^{1/2-\eps}\rceil$ in all our applications. There is certainly room for
improvement here (e.g. by choosing a different value for~$b_\ell$, rather than
setting~$b_\ell$ to be~$\ell$, or by modifying the sieve itself so that $n_\ell$ is not
necessarily bounded as a function of~$\ell$), but we feel that ensuring the decay
of the third summand will remain a rather serious constraint in general.

\par\medskip
We highlight a strategy similar to that used in Subsection~\ref{sub:sol}
that allows one to check for monochromatic arithmetic progressions for which the
length, the common difference and the ``shape'', are prescribed.  Fix positive
integers~$s$ (the desired length of the arithmetic progression), $q$ (the
desired common difference), and $c\ge3$ (the number of colours).  Similarly as
before, let~$\C$ be the group of all $c$-colourings of~$\bN$. We consider the
subsets~$I_\ell\coloneqq\{\ell sq, \ell sq+q,\dotsc, \ell sq+(s-1)q\}$
for~$\ell\in\Lambda\coloneqq\bN$.  (It is this choice of particular subsets
of~$\bN$ of length at least~$s$ that provides a control on the ``shape'' of the
arithmetic progressions to be found.) In this setting our method yields the
following result.
\begin{theorem}\label{th.vdwlike}
    Let~$(X_k)$ be a random walk on $\C$ defined as in Subsection~\ref{sub:rw}
    using $S(b,\delta)$ \emph{via} the admissible local
    sequence~$(H_\ell,C_\ell)$. For every positive real number~$\eps$ and every positive
    integer~$k$,
\begin{align*} \Pe(&\text{$X_k$ contains no monochromatic
          arithmetic progression}\\ &\text{ with common difference~$q$ and
          length~$s$})\ll k^{-1/2+\eps},
\end{align*}
      where the implied constant could be computed explicitly as a function
      of~$(c,s,q,\nu)$ (see Theorem~\ref{thm:LS} for the definition of~$\nu$) and
      depends only on~$(c,s,q)$, and on~$C_0$, $S(b,\delta)$, and the sequence~$(p_s)$.
\end{theorem}

Let us sketch briefly the proof. For each~$\ell\in\bN$, let~$H_{\ell}$ be the set of
all functions~$f\colon\bN\to[c]$ such that $f\rest{I_{\ell}}\equiv0$.  The
index in~$\C$ of each of these subgroups is~$c^{s}$.  Moreover, there is
a collection of natural representatives~$C_\ell$ for the classes
modulo~$H_{\ell}$, namely the functions with support contained in~$I_{\ell}$. Thus
$n_\ell=c^{\# I_\ell}$ is independent of~$\ell$, and since the intervals~$C_\ell$
are pairwise disjoint, the sequence~$(H_\ell,C_\ell)$ is an admissible local
sequence for~$\mathscr{C}$. Let~$\Theta_{\ell}$ be the set of classes
modulo~$H_{\ell}$ whose unique representative in~$C_\ell$ contains a monochromatic
arithmetic progression of length~$s$ that is contained in~$I_{\ell}$. Then one has
$\abs{\Theta_{\ell}}/n_{\ell}\ge c^{-s}$.

Again we may apply
Theorem~\ref{thm:LS} with $b_\ell=\ell$ for all~$\ell\in\Lambda$.  Similarly as
before, $\Pe(C_0=\infty)$ can be bounded from above: if $L\ge c^{(s-1)q+1}$, then
$\kappa(b_{\ell},\ell,\delta)\ge n_{\ell}$ whenever~$\ell\ge L$, hence
Lemma~\ref{lem:proba}\ref{it:lemprobaii} yields that
$\Pe(C_0=\infty)\le\binom{c^{s}}{\lceil
      c^{s}/2\rceil}\sum_{\ell\in\Lambda_L}2^{-\ell}$. Therefore,
$\Pe(C_0=\infty)\le\binom{c^{s}}{\lceil c^{s}/2\rceil}/L$.

By Theorem~\ref{thm:LS}, the
probability that in~$X_k$ no monochromatic arithmetic progression with common
difference~$q$ and length~$s$ is contained in~$I_{\ell}$, for all~$\ell$ in~$\Lambda_{L}$ is at most
\[
\frac{\binom{c^{s}}{\lceil c^{s}/2\rceil}}{L}+\frac{1}{L}+\left(1+(L+1)c^{(s-1)q+1}(1-\nu\kappa_{2L}^{-2})^k\right)(c^sL)^{-1}.
\]
Since this last probability is, for every~$L$, an upper bound on the
probability that there is no monochromatic arithmetic progression in~$X_k$
with common difference~$q$ and length~$s$, Theorem~\ref{th.vdwlike} follows by
setting for any fixed~$\eps>0$ and~$L\coloneqq \lceil k^{-1/2+\eps}\rceil$.

\par\medskip
We conclude by pointing out the following: van der Waerden's
theorem~\cite{vdW27} ensures that, for each fixed positive integer~$s$ and each integer~$c\ge3$,
there exists an integer~$N$ such that if $n\ge N$
then any $c$-colouring of~$[n]$ yields a monochromatic arithmetic progression
of length~$s$. In the above setting, we impose two additional conditions: the
common difference of the arithmetic progression and a constraint on its form
(it must be contained in one of the sets~$I_\ell$). Van der Waerden's
theorem does not guarantee the existence of such an arithmetic progression
and the aforementioned inequality is essentially an explicit lower bound on the
speed of rarefaction of the colourings that do not yield a monochromatic
arithmetic progression with the required properties. Furthermore, and as mentioned
in the remarks about Subsections~\ref{sub:sol} and~\ref{sub:monotri}, the
uniformity of the decay rate with respect to the number of colours~$c$ is a quite
interesting by-product of our approach.

\begin{bibdiv} 
\begin{biblist} 
	
\bib{ALW01}{article}{
   author={Alon, Noga},
   author={Lubotzky, Alexander},
   author={Wigderson, Avi},
   title={Semi-direct product in groups and zig-zag product in graphs:
   connections and applications (extended abstract)},
   conference={
      title={42nd IEEE Symposium on Foundations of Computer Science (Las
      Vegas, NV, 2001)},
   },
   book={
      publisher={IEEE Computer Soc., Los Alamitos, CA},
   },
   date={2001},
   pages={630--637},
}

\bib{AlRo05}{article}{
   author={Alon, Noga},
   author={R{\"o}dl, Vojt{\v{e}}ch},
   title={Sharp bounds for some multicolor Ramsey numbers},
   journal={Combinatorica},
   volume={25},
   date={2005},
   number={2},
   pages={125--141},
}

\bib{AlRo94}{article}{
   author={Alon, Noga},
   author={Roichman, Yuval},
   title={Random Cayley graphs and expanders},
   journal={Random Structures Algorithms},
   volume={5},
   date={1994},
   number={2},
   pages={271--284},
}

\bib{BGS10}{article}{
   author={Bourgain, Jean},
   author={Gamburd, Alex},
   author={Sarnak, Peter},
   title={Affine linear sieve, expanders, and sum-product},
   journal={Invent. Math.},
   volume={179},
   date={2010},
   number={3},
   pages={559--644},
}

\bib{ChMa08}{article}{
   author={Christofides, Demetres},
   author={Markstr{\"o}m, Klas},
   title={Expansion properties of random Cayley graphs and vertex transitive
   graphs via matrix martingales},
   journal={Random Structures Algorithms},
   volume={32},
   date={2008},
   number={1},
   pages={88--100},
}

\bib{DSV03}{book}{
   author={Davidoff, Giuliana},
   author={Sarnak, Peter},
   author={Valette, Alain},
   title={Elementary number theory, group theory, and Ramanujan graphs},
   series={London Mathematical Society Student Texts},
   volume={55},
   publisher={Cambridge University Press, Cambridge},
   date={2003},
}

\bib{JKZ13}{article}{
   author={Jouve, F.},
   author={Kowalski, E.},
   author={Zywina, D.},
   title={Splitting fields of characteristic polynomials of random elements
   in arithmetic groups},
   journal={Israel J. Math.},
   volume={193},
   date={2013},
   number={1},
   pages={263--307},
}

\bib{Kow08}{book}{
   author={Kowalski, E.},
   title={The large sieve and its applications},
   series={Cambridge Tracts in Mathematics},
   volume={175},
   note={Arithmetic geometry, random walks and discrete groups},
   publisher={Cambridge University Press, Cambridge},
   date={2008},
   pages={xxii+293},
}

\bib{LaRu04}{article}{
   author={Landau, Zeph},
   author={Russell, Alexander},
   title={Random Cayley graphs are expanders: a simple proof of the
   Alon-Roichman theorem},
   journal={Electron. J. Combin.},
   volume={11},
   date={2004},
   number={1},
   pages={Research Paper 62, 6},
}

\bib{LoSc04}{article}{
   author={Loh, Po-Shen},
   author={Schulman, Leonard J.},
   title={Improved expansion of random Cayley graphs},
   journal={Discrete Math. Theor. Comput. Sci.},
   volume={6},
   date={2004},
   number={2},
   pages={523--528 (electronic)},
}

\bib{LM12}{article}{
   author={Lubotzky, Alexander},
   author={Meiri, Chen},
   title={Sieve methods in group theory I: Powers in linear groups},
   journal={J. Amer. Math. Soc.},
   volume={25},
   date={2012},
   number={4},
   pages={1119--1148},
}

\bib{LZ}{article}{
   author={Lubotzky, Alexander},
   author={Zuk, Andrei},
   title={On Property $(\tau)$},
   eprint={http://www.ma.huji.ac.il/~alexlub/BOOKS/On property/On property.pdf},
   status={preprint},
}

\bib{MR02}{book}{
   author={Molloy, Michael},
   author={Reed, Bruce},
   title={Graph colouring and the probabilistic method},
   series={Algorithms and Combinatorics},
   volume={23},
   publisher={Springer-Verlag, Berlin},
   date={2002},
   pages={xiv+326},
}

\bib{Ram30}{article}{
   author={Ramsey, F. P.},
   title={On a Problem of Formal Logic},
   journal={Proc. London Math. Soc.},
   volume={S2-30},
   number={1},
   pages={264},
}

\bib{GV12}{article}{
   author={Golsefidy, A. Salehi},
   author={Varj{\'u}, P{\'e}ter P.},
   title={Expansion in perfect groups},
   journal={Geom. Funct. Anal.},
   volume={22},
   date={2012},
   number={6},
   pages={1832--1891},
}

\bib{vdW27}{article}{
   author={van der Waerden, B. L.},
   title={Beweis einer Baudetschen Vermutung},
   journal={Nieuw Arch. Wisk.},
   volume={15},
   date={1927},
   pages={212--216},
}

\end{biblist} 
\end{bibdiv}

\end{document}